\newtheorem{Prop}{Proposition}[section]
\newtheorem{Thm}[Prop]{Theorem}
\newtheorem{Lem}[Prop]{Lemma}
\newtheorem{Def}[Prop]{Definition}
\newtheorem{theorem}{\bf Theorem}[section]
\newtheorem{corollary}[theorem]{\bf Corollary}
\newenvironment{proof}{\noindent{\em Proof:}}{\quad \hfill$\Box$\vspace{2ex}}
\theoremstyle{nonumberplain}
\def \bI {\mathbb I}
\def \bN {\mathbb N}
\def \bR {\mathbb R}
\def \K  {\bf K}
\def \G  {\bf G}
\def \bO {\mathbb O}
\def \bC {\mathbb C}
\def \bA {\mathbb A}
\def \by {{\bf y}}
\def \bb {{\bf b}}
\def \bc {{\bf c}}
\def \bd {{\bf d}}
\def \be {{\bf e}}
\def \bs {{\bf s}}
\def \bt {{\bf t}}
\def \cB {{\cal B}}
\def \cF {{\cal F}}
\def \cI {{\cal I}}
\def \cL {{\cal L}}
\def \cN {{\cal N}}
\def \cS {{\cal S}}
\def \cE {{\cal E}}
\def \supp {\,{\rm supp}\,}
\newcommand{\0}{{\bf 0}}
\newcommand{\BK}{{\mathcal B}_{\bf K}}
\newcommand{\CC}{{\rm C}}
\newcommand{\x}{{\bf x}}
\newcommand{\y}{{\bf y}}
\newcommand{\z}{{\bf z}}
\def\sm{\setminus}
\begin{document}

\newlength{\bibitemsep}\setlength{\bibitemsep}{.2\baselineskip plus .05\baselineskip minus .05\baselineskip}
\newlength{\bibparskip}\setlength{\bibparskip}{0pt}
\let\oldthebibliography\thebibliography
\renewcommand\thebibliography[1]{%
  \oldthebibliography{#1}%
  \setlength{\parskip}{\bibitemsep}%
  \setlength{\itemsep}{\bibparskip}%
}

\vspace{2mm} \baselineskip 15pt
\renewcommand{\baselinestretch}{1.10}
\parindent=16pt  \parskip=2mm
\rm\normalsize\rm

\title{\bf Vector-valued Reproducing Kernel Banach Spaces with Group Lasso Norms\thanks{Supported by National Natural Science Foundation of China under grant 12371103, and by Guangdong Basic and Applied Basic Research Foundation (2024A1515011194).}}
\author{Liangzhi Chen\thanks{School of Computer Science and Technology, Dongguan University of Technology, Dongguan 523808, P. R. China. E-mail address: {\it chenlzh23@mail3.sysu.edu.cn.}},
\quad Haizhang Zhang\thanks{The corresponding author. School of Mathematics (Zhuhai), Sun Yat-sen University, Zhuhai, P.R. China. E-mail address: zhhaizh2@sysu.edu.cn.} , \quad and\quad Jun Zhang\thanks{Department of Psychology and Department of Mathematics, University of Michigan, Ann Arbor, MI 48109, USA. E-mail address: {\it junz@umich.edu}.}}
\date{}\maketitle

\begin{abstract} Focusing on establishing a mathematical basis for kernel methods in sparse multi-task learning, we explore the theory of vector-valued reproducing kernel Banach spaces (RKBSs) endowed with $\ell_{p,1}$-norms ($1\le p\le +\infty$), encompassing both the sparse learning case when $p=1$ and the group lasso when $p=2$. We develop RKBSs equipped with these group lasso norms that support the linear representer theorem for regularized learning frameworks. Additionally, we introduce reproducing kernels admissible for this construction. Such reproducing kernels are applicable to sparse multi-task learning with group lasso norms.

\noindent{\bf Keywords:}  vector-valued spaces, reproducing kernel Banach spaces, sparse multi-task learning, the representer theorem, admissible kernels
\end{abstract}

\section{Introduction}

Learning theory is concerned with the identification of effective predictors based on limited datasets. However, addressing such issues frequently leads to ill-posed problems, as noted in \cite{Morozov,Tikhonov}. Regularization is a commonly employed technique to mitigate these challenges. The process of learning within function spaces can be articulated as an optimization problem that encompasses both an error term and a regularization component. This can be expressed mathematically as follows:
\begin{equation}\label{regularization scheme}
	\min_{f\in\cF} \ L(f(\x),\y)+\lambda \Omega(f),
\end{equation}
where \(\cF\) denotes a function space defined over a dataset \(X\), \((\x,\y)\) represents a collection of input and output data, \(\lambda > 0\) is a regularization parameter, \(L\) signifies the error function, and \(\Omega\) is referred to as the regularizer function.

Classical instances of the regularization scheme (\ref{regularization scheme}) involve the use of norms in Hilbert spaces, a topic that has been thoroughly examined in the literature (see \cite{Berlinet,Cuckersmale,Scholkopf}). Notably, the study of learning within reproducing kernel Hilbert spaces (RKHSs) has received significant attention in the fields of machine learning \cite{Aronszajn,Scholkopf,Shawe}, statistical learning \cite{Berlinet,Vapnik}, and stochastic processes \cite{Rasmussen}, among others, over the past several decades. Several factors contribute to the efficacy of learning methodologies in RKHSs. Firstly, kernels facilitate the measurement of similarity between input data points through the application of "kernel tricks." Secondly, an RKHS is defined as a Hilbert space of functions on a set \(X\) where point evaluations are continuous linear functionals. The sample data utilized for learning are typically modeled as point evaluations of the unknown target function. Lastly, according to the Riesz representation theorem, the point evaluation functionals on \(X\) can be expressed in terms of their corresponding reproducing kernel. These elements culminate in the well-known representer theorem \cite{Argyriou,Kimeldorf}, which is particularly advantageous for learning approaches in high-dimensional or infinite-dimensional spaces.

 In contrast to Hilbert spaces, Banach spaces exhibit a greater variety of geometric structures and norms that can be useful for learning applications. Recent theoretical investigations have focused on learning within scalar-valued reproducing kernel Banach spaces (RKBSs) \cite{Micchelli1,L.Shi,Songl^1,Songl^1II,Tong,Xiao,ZhangXu,ZhangZhang} as well as in the context of \textit{multi-task learning} \cite{Alvarez,Burbea,Caponnetto,Carmeli,LinZhang,Micchelli2,ZhangXuZhangQ,ZhangZhangJ}. In particular, research concerning $\ell_1$-norm RKBSs \cite{Songl^1} has garnered significant interest. This is primarily attributed to the fact that $\ell_1$-norm regularization \cite{Tibshirani} in single-task learning scenarios frequently yields sparse solutions \cite{Candes,Donoho,Tropp}, which are highly sought after in the field of machine learning. The concept of sparsity is crucial for the extraction of relatively low-dimensional features from sample data, which typically reside in high-dimensional spaces. In summary, we make a brief list of former researches on RKBSs:
 \begin{enumerate}
 	\item Scalar-valued RKBSs \cite{ZhangXu,ZhangZhang} and vector-valued RKBSs \cite{ZhangZhangJ} built on uniformly convex and uniformly smooth Banach spaces via semi-inner products \cite{Lumer}.
 	\item Scalar-valued RKBS with the $\ell_1$-norm \cite{Songl^1,Songl^1II}.
 	\item The $s$-norm scalar-valued RKBSs \cite{Xu} developed via dual-bilinear forms and the generalized Mercer kernels.
 	\item Vector-valued RKBSs with the $\ell_1$-norm \cite{LinZhang}.
 	\item Generic definitions and unified framework of construction of scalar-valued RKBSs \cite{LinZhang2}.
 \end{enumerate}

Multi-task learning is increasingly prevalent in various applications. Traditional methods that rely on single-task learning techniques often make the unrealistic assumption that tasks are independent of one another, which typically results in suboptimal performance, particularly when dealing with small datasets. In contrast, multi-task learning leverages the interrelatedness of tasks to enhance the overall efficacy of the learning process. Many multi-task learning methodologies have been introduced to improve the performance of lasso in addressing these challenges, including the smoothly clipped absolute deviation \cite{Fan,Wang}, the adaptive lasso \cite{Zou}, the relaxed lasso \cite{Meinshausen}, the group lasso \cite{Yuan}, and the sparse group lasso \cite{Friedman,Simon}. Empirical studies \cite{Caruana,Evgeniou2005,Friedman,Meinshausen,Micchelli2} indicate that multi-task learning generally yields superior outcomes compared to single-task learning.

The primary aim of this paper is to establish a learning theory for vector-valued reproducing kernel Banach spaces (RKBSs) utilizing the \(\ell_{p,1}\) norms. Specifically, when \(p=1\), this framework simplifies to the \(\ell^1\)-norm vector-valued RKBS, which has been recently examined in \cite{LinZhang}. Our methodology is broader in scope and encompasses the significant group lasso scenario when \(p=2\). The initial objective is to construct an \(\ell_{p,1}\)-norm vector-valued RKBS based on admissible kernels, followed by the derivation of the representer theorem pertinent to regularized learning frameworks. These topics are addressed in Sections 3 and 4. The subsequent objective centers on the exploration of admissible kernels. In Section 5, we introduce a novel family of admissible kernels and engage in a discussion regarding kernel functions whose Lebesgue constants are bounded above by 1. Finally, a relaxed linear representer theorem is discussed in Section 6 to accommodate more reproducing kernels.

\section{Preliminaries and Notations}
Throughout this paper, $p$ always denotes a number in $[1,+\infty]$,
and $q$ is its conjugate number such that $1/p+1/q=1$. Let $\bN$ denote the set of all positive integers and set $\bN_k:=\{1,2,\dots,k\}$ for every $k\in\bN$.
Denote by $\bC,\bR$, and $\bR_+$ the set of complex numbers, real numbers, and nonnegative real numbers, respectively.

For a Banach space $\cB$, denote its dual Banach space by $\cB^*$. The zero element in $\cB$ is denoted by $\0$. Recall the classical Banach space $\ell^p$ consisting of elements $v=(v_1,x_2,\dots,v_k,\dots)$ with finite $\ell^p$-norm, which is defined for $1\le p<+\infty$ as
$$
\|v\|_p=\left(\sum_{i=1}^{\infty} |v_i|^p\right)^{1/p}
$$
and as $\|v\|_\infty=\sup\limits_{i}|v_i|$ for $p=+\infty$.
Let $(E,\|\cdot\|_p)$ be a finite-dimensional complex Euclidean space $E$ with the $\ell^p$-norm.
We then introduce an important notation of this paper
$$
\cB_p=\begin{cases}
              \ell^2,  & p=2;\\
              (\bC^n,\|\cdot\|_p), & p\neq 2.
      \end{cases}
$$
It is easy to see that $\cB_p^*=\cB_q$ and $\cB_p=\cB_p^{**}$.
Denote the bilinear form on $\cB_p\times\cB_q$ by $\langle \cdot,\cdot\rangle_p$.
Thus, for $x\in\cB_p,y\in\cB_q$,
$\langle x,y\rangle_p:=y(x)=x(y)=:\langle y,x\rangle_q$ and $|\langle x,y\rangle_p|\le \|x\|_p\|y\|_q$.

For two Banach spaces $\cE_1,\cE_2$, $\cL(\cE_1,\cE_2)$ denotes the space of all bounded linear operators from $\cE_1$ to $\cE_2$. It is also a Banach space with the operator norm
\begin{equation*}\label{normLE}
  \|A\|_{\cL(\cE_1,\cE_2)}:=\sup_{\alpha\in\cE_1 \atop \alpha\neq \0} \frac{\|A\alpha\|_{\cE_2}}{\|\alpha\|_{\cE_1}}.
\end{equation*}

For any nonempty set $\Omega$, denote by $\cB_p^\Omega$ the set of vector-valued functions on $\Omega$ that is nonzero on at most countable elements in $\Omega$. In other words, for every $C=(c_t)_{t\in\Omega}\in \cB_p^\Omega$, each $c_t$ is a vector in a fixed finite-dimensional Euclidean space, and $c_t\ne \0$ for at most countable elements $t\in\Omega$. We then introduce the following space
\begin{equation*}
  \ell_{p,1}(\Omega):=\left\{\CC=(c_t)_{t\in\Omega}\in \cB_p^\Omega:\|\CC\|_{p,1}=\sum_{t\in\Omega}\|c_t\|_p<+\infty\right\}.
\end{equation*}

We denote the set of $m$ samplings in an input space $X$ by $\x=\{x_i\in X:i\in\bN_m\}$, and the corresponding observations by
$\y=\{y_i\in \cB_q:i\in\bN_m\}$.
For later convenience, we introduce the following notation. For a function $\K:X\times X\to \cL(\cB_p,\cB_q)$, denote by
$$
 {\K}[\x]:=[{\K}(x_i,x_j):i,j\in\bN_m]
$$
an $m\times m$ matrix with entries in $\cL(\cB_p,\cB_q)$.
It has two associated vectors denoted by
$$
 {\K}^\x(x):=({\K}(x_i,x):i\in\bN_m),\ \ x\in X
$$
and
$$
{\K}_\x(x):=({{\K}}(x,x_i):i\in\bN_m)^T,\ \ x\in X.
$$

\subsection{Reproducing kernel Banach spaces of vector-valued functions}
Before giving a formal definition of RKBSs of vector-valued functions, we recall some terminologies.

\begin{Def}\label{BSofVVF}{\rm\cite{ZhangXu}}
A space $\cB$ of certain vector-valued functions on a prescribed set $X$ is called a \textbf{Banach space of vector-valued functions} if the point
 evaluation functionals are consistent with the
norm on $\cB$ in the sense that for all $f\in \cB$, $\|f\|_\cB=0$ if and only if $f(x)=\0$ for every $x\in X$.
A Banach space $\cB$ of vector-valued functions on $X$ is said to be a \textbf{pre-RKBS} on
  $X$ if point evaluations are continuous linear functionals on $\cB$.
\end{Def}

To accommodate the main purpose of this paper, we present a slightly different version of RKBSs of vector-valued functions from \cite{ZhangZhangJ}.  Denote a space $\cB$ with the norm $\|\cdot\|_\cB$ by $(\cB,\|\cdot\|_\cB)$.

\begin{Def}\label{defiRKBS}
   The Banach spaces $(\cB,\|\cdot\|_\cB)$ and $(\cB^\#,\|\cdot\|_{\cB^\#})$
   are called RKBSs of vector-valued functions from $X$ to $\cB_q$, provided that
  \begin{enumerate}
     \renewcommand{\labelenumi}{(\roman{enumi})}
     \item $\cB$ and $\cB^\#$ are pre-RKBS of vector-valued functions;
     \item There exists a kernel function ${\K}:X\times X\to \cL(\cB_p,\cB_q)$ %with ${\K}(x,x')={\K}(x',x)^*$ and
           such that
              $$
                 {\K}(x,\cdot)c\in \cB,~{\K}(\cdot,x)c\in \cB^\#~\text{for~all~}x\in X,c\in \cB_p;
              $$
     \item There is a bilinear form $(\cdot,\cdot)_{\K}$ on $\cB\times\cB^\#$ for which the following reproducing properties
              $$
                 \left(f,{\K}(\cdot,x)c\right)_{{\K}}
                 =\langle f(x), c\rangle_{q},~({\K}(x,\cdot)c,g)_{{\K}}=\langle c,g(x)\rangle_{p}
              $$
           hold true for all
           $x\in X,c\in\cB_p,f\in \cB,~g\in \cB^\#$.
  \end{enumerate}
Under these assumptions, ${\K}$ is called the \textbf{reproducing kernel} of $\cB$ and $\cB^\#$.
\end{Def}

\subsection{Admissible kernels}
The requirements of a kernel function that can be used to construct a vector-valued RKBS with the $\ell_{p,1}$-norm
are formulated as follows.

\begin{Def}[\bf Admissible Kernels]\label{defiadmissible}
A kernel ${\K}:X\times X\to \cL(\cB_p,\cB_q)$ is \textbf{admissible} for the
construction of RKBS of vector-valued functions from $X$ to $\cB_q$ endowed with the $\ell_{p,1}$-norm if the following assumptions are satisfied.
\begin{enumerate}
    \renewcommand{\labelenumi}{(\bf A\arabic{enumi})}
  \item\label{A1} For any $m$ pairwise distinct sampling points $\x\subseteq X$, the matrix
       $$
          {\K}[\x]:=\left[{\K}(x_i,x_j):k,j\in\bN_m\right]\in \cL(\cB_p,\cB_q)^{m\times m}
       $$
      is invertible in the sense that there exists a ${\K}'[\x]\in \cL(\cB_q,\cB_p)^{m\times m}$, such that
       $$
          {\K}[\x]{\K}'[\x]={\rm diag}(\bI_q,\dots,\bI_q)_m
          $$
          and
          $${\K}'[\x]{\K}[\x]={\rm diag}(\bI_p,\dots,\bI_p)_m
       $$
      where $\bI_p\in\cL(\cB_p,\cB_p)$ is the identity operator on $\cB_p$, and ${\rm diag}(\bI_p,\dots,\bI_p)_m$ is
      an $m\times m$ matrix with diagonal entries $\bI_p$ and zero operator $\bO$ elsewhere.
      We simply denote ${\K}'[\x]$ by ${\K}[\x]^{-1}$ if no confusion is caused.
  \item\label{A2} The kernel ${\K}$ is bounded.
   That is, there exists $\kappa>0$ such that the operator norm
   $$
   \|{\K}(x,x')\|_{\cL(\cB_p,\cB_q)}\le \kappa
   $$
    for all $x,x'\in X$.
  \item\label{A3} For any pairwise distinct points $x_i\in X,i\in\bN$ and $(c_i)_{i\in\bN}\in \ell_{p,1}(\bN)$, if
       $\sum\limits_{i\in \bN}{\K}(x_i,x)c_i=\0$ for all $x\in X$, then $c_i=\0$ for all $i\in\bN$.
  \item\label{A4} For any pairwise distinct $x_1,x_2,\dots,x_m,x_{m+1}\in X$,
       $$
          \|{\K}[\x]^{-1}{\K}_\x(x_{m+1})\|_{p,1}:=\sup_{c\in\cB_p\atop c\neq \0}\frac{\|{\K}[\x]^{-1}{\K}_\x(x_{m+1})c\|_{p,1}}{\|c\|_p}
       $$
       is bounded above by $1$, where ${\K}[\x]^{-1}{\K}_\x(x_{m+1})$ is a linear operator from $\cB_p$ to $\cB_p^m$.
\end{enumerate}
\end{Def}

We denote the corresponding assumptions for the scalar case in \cite{Songl^1}
by ({\bf A\ref{A1}$'$})--({\bf A\ref{A4}$'$}).

We make some remarks on assumptions ({\bf A\ref{A1}}) and ({\bf A\ref{A4}}) in {Definition \ref{defiadmissible}}.
For ({\bf A\ref{A1}}), note that for $p<q$, we have $\ell^p\subseteq \ell^q$ and there do exist two linear operators
$$
A:\ell^p\to \ell^q, ~B:\ell^q\to\ell^p
$$
 such that $BA=\bI_p$ and $\|AB\|_{\cL(\ell_q,\ell_q)}=1$.
If both the linear operators $A,B$ are bounded, then most of the theoretic work in this paper
would hold for $\cB_p=\ell^p$. But unfortunately, for $1 \leq s \neq t \leq +\infty$, there do not exist two bounded linear operators $A: \ell_s \to \ell_t$ and $B: \ell_t \to \ell_s$ such that either $AB = \bI_t$ or $BA = \bI_s$. This result follows from the well-known Pitt's theorem (see, for example, \cite{Fabian}, p. 175), which states that if $1\le s < t<+\infty$, then any operator $A \in \mathcal{L}(\ell_t, \ell_s)$ must be compact. Consequently, if $A \in \mathcal{L}(\ell_s, \ell_t)$ and $B \in \mathcal{L}(\ell_t, \ell_s)$ with $s \neq t$, both compositions $AB$ and $BA$ are compact operators, and therefore cannot be the identity operator. This is the main reason why we have to assume $\cB_p~(p\neq 2)$ to be a finite-dimensional Euclidean space. Regarding assumption ({\bf A\ref{A4}}), it may initially appear counterintuitive, but it is crucial for admissible kernels. Its presence enables a linear representer theorem for learning in RKBSs, which we will explore further in this paper.

\subsection{Further preliminaries on matrix theory}
We discuss some useful facts about the operator norm $\|\cdot\|_{p,1}$ defined in ({\bf A\ref{A4}}).
 For an $m\times n$ operator matrix $B\in \cL(\cB_p,\cB_p)^{m\times n}$ and a vector $\bc=(c_1,c_2,\dots,c_n)^T\in\cB_p^n$,
  we have the following compatible inequality for $\|\cdot\|_{p,1}$,
\begin{equation}\label{Matrixnormcompatible}
   \begin{split}
      \|B{\bc}\|_{p,1}
      &=\|\sum_{k=1}^n B_k c_k\|_{p,1}\le\sum_{k=1}^n \|B_k c_k\|_{p,1} \\
      &\le\sum_{k=1}^n \|B_k\|_{p,1} \|c_k\|_{p}\le\max_{k\in\bN_n}\left(\|B_{k}\|_{p,1}\right)~\|{\bc}\|_{p,1},
   \end{split}
\end{equation}
where $B_{k}$ denotes the $k$-th column of $B$. When the entries of $\bc$ are scalar-valued, $\|\bc\|_{p,1}=\|\bc\|_1$.

Additionally, the inversion of a $2\times 2$ block matrix shown below will be frequently utilized throughout this paper:
\begin{equation}\label{matrixinversion}
  \left[
     \begin{array}{cc}
        A & B \\
        C & D \\
     \end{array}
   \right]^{-1}=
 \left[
     \begin{array}{ll}
       A^{-1}+A^{-1}BMCA^{-1} & -A^{-1}BM\\
       -MCA^{-1} & M\\
     \end{array}
   \right]
\end{equation}
where $M=(D-CA^{-1}B)^{-1}$.

\section{Construction of RKBSs}
To begin with, we will use a similar method as in \cite{Songl^1} to construct
a vector-valued Banach space with the $\ell_{p,1}$ based on a kernel
satisfying ({\bf A\ref{A2}}) and ({\bf A\ref{A3}}) in {Definition \ref{defiadmissible}}.

Let $X$ be a given input space. We shall construct the following two RKBSs of vector-valued functions from $X$ to $\cB_q$. The first one is
\begin{equation}\label{BK}
  \BK:=\biggl\{\sum_{x\in\supp \CC}{\K}(x,\cdot)c_x:\CC=(c_x)_{x\in X}\in \ell_{p,1}(X)\biggr\}
\end{equation}
with the norm
\begin{equation}\label{normBK}
  \biggl\|\sum_{x\in\supp \CC}{\K}(x,\cdot)c_x\biggr\|_{\BK}:=\sum_{x\in\supp \CC}\|c_x\|_p.
\end{equation}
And the second one is
 \begin{equation}\label{BK*}
   {\BK^\#}:=\left\{\sum_{x\in\supp \CC} {\K}(\cdot,x)c_x:\CC=(c_x)_{x\in X}\in \ell_{p,1}(X)\right\}
 \end{equation}
with the norm
\begin{equation}\label{normBK*}
 { \biggl\|\sum_{x\in\supp \CC} {\K}(\cdot,x)c_x\biggr\|_{\BK^\#}:=\sup_{y\in X}\biggl\|\sum_{x\in\supp \CC} {\K}(y,x)c_x\biggr\|_q.}
\end{equation}

\subsection{The bilinear form and point evaluations}
Denote
$$
\BK^0:=\left\{\sum_{i=1}^m{\K}(x_i,\cdot)c_i:x_i\in X,c_i\in\cB_p,i\in\bN_m~\text{for~all~}m\in\bN\right\}
$$
with the norm
\begin{equation}\label{normBKnotcomplete}
  \left\|\sum_{i=1}^n{\K}(x_i,\cdot)c_i\right\|_{\BK^0}:=\sum_{i=1}^n \left\|c_i\right\|_p,
\end{equation}
and a linear space
\begin{equation*}
   {\BK^{0,\#}}:=\left\{\sum_{i=1}^m{\K}(\cdot,x_i)c_i:x_i\in X,c_i\in\cB_p,i\in\bN_m,\ m\in\bN\right\}.
 \end{equation*}
 The above two linear spaces both consist of functions from $X$ to $\cB_q$.

We then define a bilinear form $(\cdot,\cdot)_{{\K}}$ on $\BK^0\times \BK^{0,\#}$ by
\begin{equation}\label{bilinearfinite}
  \left(\sum_{i=1}^m {\K}(x_i,\cdot)a_i,\sum_{j=1}^{m'} {\K}(\cdot,x'_j)b_j\right)_{{\K}}
  =\sum_{i=1}^m\sum_{j=1}^{m'}\langle a_i,{\K}(x_i,x'_j)b_j\rangle_{p},
\end{equation}
where $m,m'\in\bN$ and $x_i,x'_j\in X,~a_i,b_j\in\cB_p$ for $i\in\bN_m,j\in\bN_{m'}$.

By ({\bf A\ref{A3}}), we know that the norm in (\ref{normBKnotcomplete}) and the above bilinear form in (\ref{bilinearfinite})
are well-defined on their underlying spaces.

To proceed, we have to show that the point evaluation operators $\delta_x:\BK^{0}\to\cB_q,~x\in X$
or $\delta_x:\BK^{0,\#}\to\cB_q,~x\in X$ defined as follows
$$
 \delta_x(f)=f(x),~\text{where}~f\in\BK^0~\text{or}~\BK^{0,\#},
$$
 are continuous operators.

\begin{Prop}\label{Proppointevaluation}
The point evaluation operators are continuous on $\BK^0$ because
$$
\|\delta_x(f)\|_q\le \kappa\|f\|_{\BK^0}, ~~\text{for}~ f\in \BK^0,
$$
 where $\kappa>0$ is the constant in ({\bf A\ref{A2}}).
\end{Prop}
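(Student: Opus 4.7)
The plan is a direct estimate using only the triangle inequality in $\cB_q$, the definition of the operator norm on $\cL(\cB_p,\cB_q)$, and the uniform kernel bound in ({\bf A\ref{A2}}). No deeper structural property of $\BK^0$ (completeness, duality, bilinear form) is needed here; the whole statement is a one-shot computation on a generic finite representation.

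First I would take an arbitrary $f\in\BK^0$ and fix any representation $f=\sum_{i=1}^m \K(x_i,\cdot)c_i$ with $x_i\in X$, $c_i\in\cB_p$. As the paper already notes, ({\bf A\ref{A3}}) guarantees that $\|\cdot\|_{\BK^0}$ is well-defined on $\BK^0$, so the value $\sum_{i=1}^m\|c_i\|_p$ is unambiguously equal to $\|f\|_{\BK^0}$ regardless of the representation used; any bound established for this particular sum will therefore be a bound for $f$ itself.

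Second, I would evaluate at $x$ to obtain
$$\delta_x(f)=f(x)=\sum_{i=1}^m \K(x_i,x)c_i\in\cB_q,$$
and apply the triangle inequality in $\cB_q$ together with the compatibility of the operator norm:
$$\|\delta_x(f)\|_q\le\sum_{i=1}^m\|\K(x_i,x)c_i\|_q\le\sum_{i=1}^m\|\K(x_i,x)\|_{\cL(\cB_p,\cB_q)}\,\|c_i\|_p.$$
Invoking ({\bf A\ref{A2}}), each factor $\|\K(x_i,x)\|_{\cL(\cB_p,\cB_q)}$ is at most $\kappa$, so the right-hand side is bounded by $\kappa\sum_{i=1}^m\|c_i\|_p=\kappa\|f\|_{\BK^0}$, which is exactly the desired inequality.

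There is no genuine obstacle in this proof; it is essentially a verification. The only point requiring care is that the estimate must not depend on the specific representation of $f$, and this is precisely what ({\bf A\ref{A3}}) is invoked for prior to the proposition. Once that is acknowledged, the argument is two lines of triangle inequality plus operator-norm bookkeeping.
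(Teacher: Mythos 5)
Your argument is correct and is in substance the same as the paper's: both proofs amount to the triangle inequality plus the uniform bound ({\bf A\ref{A2}}) applied to a finite representation $f=\sum_{i}\K(x_i,\cdot)c_i$, yielding $\|f(x)\|_q\le\kappa\sum_{i}\|c_i\|_p=\kappa\|f\|_{\BK^0}$. The only cosmetic difference is that the paper first writes $\|f(x)\|_q=\sup_{\|c\|_p\le 1}|\langle f(x),c\rangle_q|$ and passes through the bilinear form $(\cdot,\cdot)_{\K}$, whereas you estimate $\bigl\|\sum_i\K(x_i,x)c_i\bigr\|_q$ directly; your appeal to ({\bf A\ref{A3}}) for representation-independence of the norm (for representations with pairwise distinct points) is exactly the caveat the paper also relies on.
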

\begin{proof} Let $f=\sum_{i=1}^n {\K}(z_i,\cdot)a_i\in\BK^0$ with $z_i\in X,~a_i\in\cB_p,i\in\bN_n$.
Then we have
\begin{equation*}
\begin{split}
  \|f(x)\|_q
  &=\sup_{\|c\|_p\le 1} |\langle  f(x),c\rangle_q|\\
  &=\sup_{\|c\|_p\le 1} \left|\left(\sum_{i=1}^n {\K}(z_i,\cdot)a_i,{\K}(\cdot,x)c\right)_{\K}\right|\\
  &=\sup_{\|c\|_p\le 1} \left|\sum_{i=1}^n \langle a_i,{\K}(z_i,x)c\rangle_{p}\right|\\
  &\le \sup_{\|c\|_p\le 1} \sum_{i=1}^n \|a_i\|_p \sup_{i\in\bN_n}\|{\K}(z_i,x)c\|_q\\
  &\le \|f\|_{\BK} \sup_{\|c\|_p\le 1} \sup_{i\in\bN_n}\|{\K}(z_i,x)\|_{\cL(\cB_p,\cB_q)}\|c\|_p\\
  &\le \kappa \|f\|_{\BK}.
  \end{split}
\end{equation*}
This shows that the point evaluation operators are continuous on $\BK^0$.
\end{proof}

By \cite[Proposition 2.4]{Songl^1}, we know that the norm defined by
\begin{equation}\label{normBK*notcomplete}
  \left\| g \right\|_{\BK^{0\#}}:=\sup_{f\in\BK^0\atop f\neq 0}\frac{|(f,g)_{\K}|}{\|f\|_{\BK^0}}.
\end{equation}
 is well defined.
Moreover, using reasoning similar to that in Proposition \ref{Proppointevaluation},
we can show that the point evaluation operators on $\BK^{0,\#}$ are continuous and
 \begin{equation}\label{g(x)lessthan gBK}
 \begin{split}
   \|g(x)\|_q&=\sup_{\|c\|_p\le 1} |\langle g(x), c\rangle_q|\\
   &=\sup_{\|c\|_p\le 1} |({\K}(x,\cdot)c,g)_{{\K}}|\le \|g\|_{\BK^{0,\#}}
   \end{split}
 \end{equation}
for every $g\in\BK^{0,\#}$.

The norm defined as in (\ref{normBK*notcomplete}) has another equivalent but simpler form.
\begin{Prop}\label{PropnormBK*notcomplete}
For any $g\in\BK^{0,\#}$, it holds that
\begin{equation*}
  \|g\|_{\BK^{0,\#}}=\sup\limits_{x\in X}\|g(x)\|_q.
\end{equation*}
\end{Prop}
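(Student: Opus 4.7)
The inequality $\sup_{x\in X}\|g(x)\|_q \le \|g\|_{\BK^{0,\#}}$ has effectively already been established in display~(\ref{g(x)lessthan gBK}): by taking the supremum over $x$ of the estimate $\|g(x)\|_q \le \|g\|_{\BK^{0,\#}}$, one direction is immediate. The plan is therefore to focus on the reverse inequality $\|g\|_{\BK^{0,\#}} \le \sup_{x\in X}\|g(x)\|_q$, which is the substantive content.

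To do this, I will fix an arbitrary nonzero $f = \sum_{i=1}^n \K(x_i,\cdot)a_i \in \BK^0$ and an arbitrary $g = \sum_{j=1}^{m'}\K(\cdot,x'_j)b_j \in \BK^{0,\#}$, and rewrite the bilinear form using (\ref{bilinearfinite}):
\begin{equation*}
(f,g)_{\K}=\sum_{i=1}^n\sum_{j=1}^{m'}\langle a_i,\K(x_i,x'_j)b_j\rangle_p=\sum_{i=1}^n\Bigl\langle a_i,\sum_{j=1}^{m'}\K(x_i,x'_j)b_j\Bigr\rangle_p=\sum_{i=1}^n\langle a_i,g(x_i)\rangle_p.
\end{equation*}
The last identity uses only the fact that $g$ evaluated at $x_i$ is $\sum_j \K(x_i,x'_j)b_j$, which is a purely formal consequence of how $g$ is written; so the representation of $g$ drops out. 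Applying $|\langle a_i,g(x_i)\rangle_p|\le \|a_i\|_p\|g(x_i)\|_q$ termwise and bounding every $\|g(x_i)\|_q$ by $\sup_{x\in X}\|g(x)\|_q$, I obtain
\begin{equation*}
|(f,g)_{\K}|\le \Bigl(\sup_{x\in X}\|g(x)\|_q\Bigr)\sum_{i=1}^n\|a_i\|_p = \Bigl(\sup_{x\in X}\|g(x)\|_q\Bigr)\|f\|_{\BK^0}.
\end{equation*}
Dividing by $\|f\|_{\BK^0}$ and taking the supremum over $f\in\BK^0\setminus\{0\}$, according to the definition (\ref{normBK*notcomplete}), yields $\|g\|_{\BK^{0,\#}} \le \sup_{x\in X}\|g(x)\|_q$.

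The only delicate issue is really the rewriting step: one has to be sure the expression $\sum_i\langle a_i,g(x_i)\rangle_p$ is truly well-defined, i.e.\ independent of the finite representation chosen for $f$. But this is immediate because the value $g(x_i)$ depends only on the \emph{function} $g$ (not on its expansion), and because assumption (\textbf{A3}) already implies that distinct expansions of $f$ agree as functions and yield identical pairings against any $g\in\BK^{0,\#}$ (this is exactly what guarantees well-definedness of the bilinear form cited after (\ref{bilinearfinite})). Once this is noted, the argument is short; no use of (\textbf{A1}), (\textbf{A2}) or (\textbf{A4}) is required beyond what went into setting up the bilinear form and the two norms.
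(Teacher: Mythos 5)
Your proposal is correct and follows essentially the same route as the paper: one direction is read off from the point-evaluation bound (\ref{g(x)lessthan gBK}), and the reverse direction expands $(f,g)_{\K}$ via the reproducing pairing into $\sum_i\langle a_i,g(x_i)\rangle_p$ and estimates termwise by $\|a_i\|_p\sup_{x\in X}\|g(x)\|_q$ before taking the supremum over $f$. Your added remark on independence of the representation of $f$ is exactly the well-definedness already guaranteed by ({\bf A3}), so nothing further is needed.
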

\begin{proof}
By (\ref{g(x)lessthan gBK}), we have $\sup\limits_{x\in X}\|g(x)\|_q\le \|g\|_{\BK^{0,\#}}$.
We shall prove the opposite direction.
For any $f\in\BK^0$, there exist pairwise distinct points
$x_i\in X,~c_i\in\cB_p,~i\in\bN_n$ such that
$$
f(x)=\sum_{i\in\bN_n} {\K}(x_i,x)c_i,~~~x\in X.
$$
Then, we have for every $g\in\BK^{0,\#}$,
\begin{equation*}
   \begin{split}
   |(f,g)_{{\K}}|
   &= \left|\left(\sum_{i\in\bN_n} {\K}(x_i,\cdot)c_i,g\right)_{{\K}}\right|\\
   &\le \sum_{i\in\bN_n} \left|\langle  c_i, g(x_i)\rangle_{p}\right|\\
   &\le \sum_{i\in\bN_n} \|c_i\|_p\sup_{i\in\bN_n}\|g(x_i)\|_q\\
   &\le \|f\|_{\BK} \sup_{x\in X}\|g(x)\|_q.
      \end{split}
\end{equation*}
It follows that $\|g\|_{\BK^\#}\le \sup\limits_{x\in X}\|g(x)\|_q$, which completes the proof.
\end{proof}

So far, we have introduced two normed vector spaces, $(\BK^0,\|\cdot\|_{\BK^0})$ and $(\BK^{0,\#},\|\cdot\|_{\BK^{0,\#}})$, each having continuous point evaluation functionals. Additionally, there is a bilinear form (\ref{bilinearfinite}) defined on the product space $\BK^0 \times \BK^{0,\#}$.

\subsection{Completion of $\BK^0$ and $\BK^{0,\#}$}
With the previous preparations, we are now ready to complete $\BK^0$ and $\BK^{0,\#}$ into RKBSs.
Just like the classical completion process, we simply add elements into $\BK^0$ and $\BK^{0,\#}$ to make them Banach spaces of functions.
For convenience, we use the notation $\cN_0$ to represent $\BK^0$ or $\BK^{0,\#}$.
Let $\{f_n:n\in \bN\}$ be a Cauchy sequence in $\cN_0$.
Then by {Proposition \ref{Proppointevaluation}} and the fact that $\cB_q$ is a Banach space,
for any $x\in X$, the sequence $\{f_n(x):n\in\bN\}$ is convergent to some point in $\cB_q$.
We denote this limit by $f(x)$, which defines a vector-valued function $f:X\to\cB_q$.
It is easy to see that $f$ is well-defined.
We then let $\cN$ be the set consisting of all such limit vector-valued functions with
the norm $\|f\|_{\cN}=\lim\limits_{n\to\infty} \|f_n\|_{\cN_0}$. Here, $\cN$ denote either $\BK$ or $\BK^\#$.

Since the rest of the completion process is the same as in \cite{Songl^1},
we only have a quick review and conclude the followings without proof.

\ \\
By \textbf{Proposition \ref{Proppointevaluation}} and \cite[Propositions 2.3 and 3.1]{Songl^1}, we have
$$
   ({\bf A\ref{A3}})~~\Longleftrightarrow~~\|\cdot\|_{\BK}~\text{is~well~defined}~\mbox{and}~\BK~\text{is~a~pre-RKBS}.
$$
By \textbf{Proposition \ref{PropnormBK*notcomplete}} and \cite[Proposition 2.5 and Lemma 3.3]{Songl^1}, we have
$$
 \|\cdot\|_{\BK^\#}~\text{is~well~defined}~\mbox{and}~ \BK^\#~\text{is~a~pre-RKBS}.
$$
Moreover, the bilinear form could be extended uniquely to $\BK\times\BK^\#$
such that the reproducing property in {Definition \ref{defiRKBS}} holds true.
That is,
\begin{equation}\label{bilinearfg}
  \left(f,{\K}(\cdot,x)c\right)_{{\K}}=\langle  f(x),c\rangle_q~\text{and}~\left({\K}(x,\cdot)c,g\right)_{{\K}}=\langle  c,g(x)\rangle_p
\end{equation}
for every $x\in X,c\in \cB_p,f\in\BK,g\in\BK^\#$.

We conclude the above discussion as follows.
\begin{Thm}\label{theoremwithoutproof}
Let ${\K}:X\times X\to \cL(\cB_p,\cB_q)$ be a kernel function satisfying ({\bf A\ref{A2}}) and ({\bf A\ref{A3}}).
Then the spaces $\BK$ and $\BK^\#$ defined in (\ref{BK}) and (\ref{BK*}), respectively,  satisfy
\begin{enumerate}
\renewcommand{\labelenumi}{(\roman{enumi})}
  \item they are both RKBSs of vector-valued functions from $X$ to $\cB_q$ with ${\K}$ being their reproducing kernel;
  \item the bilinear form (\ref{bilinearfinite}) can be extended to $\BK\times\BK^\#$,
  which satisfies the reproducing property (\ref{bilinearfg}) and
          \begin{equation}\label{Cauchyinequality}
            |(f,g)_{{\K}}|\le \|f\|_{\BK}\|g\|_{\BK^\#},\ \ f\in\BK,g\in\BK^\#.
          \end{equation}
\end{enumerate}
\end{Thm}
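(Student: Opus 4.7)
The plan is to complete the two normed spaces $\BK^0$ and $\BK^{0,\#}$ in the usual way, and then transfer the bilinear form and the reproducing property to the completions by continuity. The four ingredients already in place from the preceding subsection are Proposition \ref{Proppointevaluation} (point evaluation is continuous on $\BK^0$), Proposition \ref{PropnormBK*notcomplete} (the norm on $\BK^{0,\#}$ coincides with the supremum of pointwise $\cB_q$-norms), the inequality $|(f,g)_{\K}|\le\|f\|_{\BK^0}\|g\|_{\BK^{0,\#}}$ read off from (\ref{normBK*notcomplete}), and assumption ({\bf A\ref{A3}}), which ensures that both (\ref{normBKnotcomplete}) and (\ref{normBK*notcomplete}) are genuine norms rather than seminorms.

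First I would fix one of the two pre-completion spaces, call it $\cN_0$, and a Cauchy sequence $\{f_n\}\subset\cN_0$. Proposition \ref{Proppointevaluation} (for $\cN_0=\BK^0$) and the inequality (\ref{g(x)lessthan gBK}) (for $\cN_0=\BK^{0,\#}$) both imply that $\{f_n(x)\}$ is Cauchy in $\cB_q$ for every fixed $x$, so the limit $f(x):=\lim_n f_n(x)$ defines a bona fide $\cB_q$-valued function. Declaring $\cN$ to consist of all such limits, with $\|f\|_{\cN}:=\lim_n \|f_n\|_{\cN_0}$, I would then verify along the lines of \cite{Songl^1} that this norm is independent of the representing Cauchy sequence, that $\cN$ is a Banach space, that the natural inclusion $\cN_0\hookrightarrow\cN$ is isometric, and that point evaluations remain continuous on $\cN$. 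The essential consistency check $\|f\|_{\cN}=0\Rightarrow f\equiv\0$ is immediate: for $\cN_0=\BK^0$ from $\|f(x)\|_q\le\kappa\lim_n\|f_n\|_{\cN_0}=0$, and for $\cN_0=\BK^{0,\#}$ from Proposition \ref{PropnormBK*notcomplete} directly.

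Next I would extend the bilinear form (\ref{bilinearfinite}) by continuity. Given $f\in\BK$ and $g\in\BK^\#$ with representatives $\{f_n\}\subset\BK^0$ and $\{g_n\}\subset\BK^{0,\#}$, the estimate
\begin{equation*}
  |(f_n,g_n)_{\K}-(f_m,g_m)_{\K}|\le\|f_n-f_m\|_{\BK^0}\|g_n\|_{\BK^{0,\#}}+\|f_m\|_{\BK^0}\|g_n-g_m\|_{\BK^{0,\#}}
\end{equation*}
shows that $\{(f_n,g_n)_{\K}\}$ is a Cauchy sequence of scalars, so I define $(f,g)_{\K}$ to be its limit; a similar estimate confirms that this value does not depend on the chosen representatives, and passing to the limit in the pre-completion inequality recovers (\ref{Cauchyinequality}). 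For the reproducing identity (\ref{bilinearfg}), I would plug the constant sequence $g_n\equiv\K(\cdot,x)c\in\BK^{0,\#}$ into the finite-sum identity (\ref{bilinearfinite}), obtaining $(f_n,\K(\cdot,x)c)_{\K}=\langle f_n(x),c\rangle_q$ for every $n$; the left side tends to $(f,\K(\cdot,x)c)_{\K}$ by the continuity just established, and the right side tends to $\langle f(x),c\rangle_q$ by continuity of point evaluation. The companion identity follows by the symmetric argument with $f_n\equiv\K(x,\cdot)c$ and a Cauchy approximant $\{g_n\}$ of $g$.

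The one step that is not mechanical is the representation-independence arguments for $\|\cdot\|_{\cN}$ and for the extended bilinear form, together with the check that the completions are genuine Banach spaces of vector-valued functions rather than spaces of equivalence classes. Each of these rests on the interplay between continuity of point evaluation and assumption ({\bf A\ref{A3}}), and each is handled exactly as in the scalar-valued arguments of \cite{Songl^1}; I expect no genuinely new difficulty in moving to the $\cB_q$-valued setting, so the remaining work is bookkeeping.
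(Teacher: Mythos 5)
Your proposal is correct and follows essentially the same route as the paper, which itself only sketches the argument: complete $\BK^0$ and $\BK^{0,\#}$ via Cauchy sequences using the continuity of point evaluations (Proposition \ref{Proppointevaluation} and (\ref{g(x)lessthan gBK})), extend the bilinear form by continuity using the bound coming from (\ref{normBK*notcomplete}), and obtain (\ref{bilinearfg}) and (\ref{Cauchyinequality}) by passing to the limit, with the remaining verifications deferred to the scalar-valued arguments of \cite{Songl^1} exactly as the paper does.
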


\section{The Representer Theorem}

The linear representer theorem is very important in regularized learning schemes in machine learning.
It enables us to transform the optimization problem in an infinite-dimensional space into one in a finite-dimensional space.
That is, the solution of the optimization problem can be represented explicitly as the finite
linear combinations of the kernels centered on the training samples.
  The representer theorem for the regularized learning schemes on RKBSs and for the minimal norm interpolations
  are often related \cite{Argyriou,Micchelli1,Songl^1,WangXu}.

In this section, we shall use the assumptions ({\bf A\ref{A1}}),
({\bf A\ref{A2}}) and ({\bf A\ref{A4}}) in {Definition \ref{defiadmissible}}
to deduce a corresponding representer theorem for the constructed vector-valued RKBSs $\BK$ and $\BK^\#$.

Recall that a linear operator between norm vector spaces $F:\cN_1\to \cN_2$ is said to be
\textit{completely continuous} \cite{Conway} on $\cN_1$,
if for any sequence $\{z_k\}\subseteq \cN_1$ weakly
convergent to $z_0\in\cN_1$, $\{F(z_k)\}$ converges to $F(z_0)$ strongly.
Note that every linear compact operator is completely continuous.
For example, the projection $P$ from an infinite-dimensional Banach space
to its finite-dimensional subspace is completely continuous.

\begin{Def}[{\bf Acceptable Regularized Learning Schemes}]\label{defiacceptableRLS}
Let $\x=\{x_i:i\in\bN_m\}\subseteq X$ be a set of pairwise distinct sampling points.
For any $f\in\BK$, define $f(\x)=(f(x_i):i\in\bN_m)^T\in\cB_q^m$.
Consider a loss function $L:\cB_q^m\times \cB_q^m\to \bR_+$ satisfying $L(\y,\y)=0$ for any $\y\in \cB_q^m$.
Let $\lambda>0$ and $\phi:\bR_+\to \bR_+$ be a nondecreasing function. A regularized learning scheme
\begin{equation}\label{acceptableRLS}
 \inf_{f\in\BK}\left\{L(f(\x),\y)+\lambda\phi(\|f\|_{\BK})\right\}
\end{equation}
is said to be \textbf{acceptable} in $\BK$ if $L$ is completely continuous on $\cB_q^m\times \cB_q^m$,
$\phi$ is continuous, and $\lim\limits_{t\to\infty}\phi(t)=+\infty$.
\end{Def}

Note that if the space $\cB_q$ is a finite-dimensional vector space or is the classical $\ell^1$ space, then strong continuity is equivalent to
continuity.

\begin{Def}
The space $\BK$ is said to satisfy the
\textbf{linear representer theorem for acceptable regularized learning}
if every acceptable regularized learning scheme (\ref{acceptableRLS}) has a minimizer of the form
\begin{equation}\label{LinearRT}
  f_0(x)=\sum_{j=1}^m {\K}(x_j,x)c_j,~~~\text{where}~x\in X,~c_j\in\cB_p,~j\in\bN_m.
\end{equation}
\end{Def}

 Denote
 $$
 \cS^\x:=\left\{\sum_{j=1}^m {\K}(x_j,\cdot)c_j:c_j\in\cB_p,j\in\bN_m\right\}.
 $$
 One should be aware that although the space $\cS^\x$ defined here is the ``span'' of $\{{\K}(x_i,\cdot):i\in\bN_m\}$
 with their coefficient in $\cB_p$, it may not be a finite-dimensional subspace of $\BK$.
  That is why we impose the complete continuity on $L$.

 A minimal norm interpolant in $\BK$ with respect to $(\x,\y)=\{(x_i,y_i):i\in\bN_m\}$ is a function $f_{\min}$ that is a minimizer of
 \begin{equation}\label{minimalNI}
   \inf\{\|f\|_{\BK}:f\in\cI_\x(\y)\},
 \end{equation}
  where $\cI_\x(\y):=\{f\in\BK:f(\x)=\y\}$. Unless stated otherwise, we assume that $f_{\min}$ always exists.

 \begin{Def}
 The space $\BK$ is said to satisfy the
 \textbf{linear representer theorem for minimal norm interpolation}
 if for arbitrary choice of training data $(\x,\y)$, the minimal norm interpolant
 $f_{\min}$ of (\ref{minimalNI}) lies in $\cS^\x.$
\end{Def}

 We will prove that the space $\BK$ satisfies the linear representer theorem for acceptable
 regularized learning if and only if it satisfies the linear representer
 theorem for minimal norm interpolation.

Let $\{e^i_p:i\in\bN\}$ be the canonical unit vectors of $\cB_p$.
That is, $e^i_p=(0,\dots,0,1,0,\dots)$ with the $i$-th coordinate $1$ and $0$ otherwise.
For $p\neq 2$, the set $\{e^i_p:i\in\bN\}$ is of finite cardinality.
Also, for $x=\sum\limits_{i} x_ie^i_p\in \cB_p,~y=\sum\limits_{i} y_ie^i_q\in \cB_q$, $\langle x,y\rangle_p=\sum\limits_{i} x_i y_i$.
With $m\in\bN$ being fixed, we denote $\be^i_p$ be the element $(e^i_p,e^i_p,\dots,e^i_p)\in\cB_p^m$.

\begin{Thm}\label{theoremARLequivalenttoMNI}
The space $\BK$ satisfies the linear representer theorem for acceptable regularized learning
if and only if it satisfies the linear representer theorem for minimal norm interpolation.
\end{Thm}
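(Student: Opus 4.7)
The plan is to establish the two directions separately. The reverse implication MNI $\Rightarrow$ ARL reduces to a short monotonicity argument together with an existence step on a ``finite-dimensional'' subspace, while the forward implication ARL $\Rightarrow$ MNI requires a limiting procedure built from a carefully chosen family of regularized schemes indexed by $\lambda_n\searrow 0^+$.

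For MNI $\Rightarrow$ ARL, I would first observe that for any $f\in\BK$, applying the MNI representer theorem to the data $(\x,f(\x))$ produces $f_{\min}\in\cS^\x$ with $f_{\min}(\x)=f(\x)$ and $\|f_{\min}\|_{\BK}\le\|f\|_{\BK}$, so by monotonicity of $\phi$ the value of the regularized objective at $f_{\min}$ is no larger than at $f$. Hence the infimum of (\ref{acceptableRLS}) over $\BK$ coincides with its infimum over $\cS^\x$, and it suffices to exhibit a minimizer inside $\cS^\x$. Using ({\bf A\ref{A1}}) to identify $\cS^\x$ with $\cB_p^m$ via its coefficient representation, existence of a minimizer then follows from coercivity of $\phi$, continuity of the objective in the coefficient vector, and compactness of bounded sets --- direct in finite dimensions when $p\ne 2$, and via weak compactness combined with complete continuity of $L$ when $p=2$.

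For ARL $\Rightarrow$ MNI, let $f_{\min}$ be a minimal-norm interpolant of data $(\x,\y)$, and build an acceptable family with a suitably chosen completely continuous error functional (e.g.\ $L(u,\y)=\sum_{i=1}^m\|u_i-y_i\|_q$ in the finite-dimensional or $\ell^1$ regime, or a compactly continuous surrogate vanishing exactly on the diagonal) together with $\phi(t)=t$. By the ARL hypothesis each $\lambda_n>0$ yields a minimizer $f_n=\sum_{j=1}^m\K(x_j,\cdot)c_j^{(n)}\in\cS^\x$. Testing each $f_n$ against $f_{\min}$ in the regularized objective yields uniform boundedness of $\|f_n\|_{\BK}$ together with $L(f_n(\x),\y)\to 0$. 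Transferring $f_n$ to coefficient space through the bijection from ({\bf A\ref{A1}}), I would extract a (weakly) convergent subsequence and pass to a limit $f^\star\in\cS^\x$. Continuity of point evaluations on $\BK$ then gives $f^\star(\x)=\y$, while lower semicontinuity of $\|\cdot\|_{\BK}$ gives $\|f^\star\|_{\BK}\le\|f_{\min}\|_{\BK}$, so $f^\star$ is itself a minimal-norm interpolant lying in $\cS^\x$.

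The main obstacle is the compactness step in the forward direction when $p=2$: there $\cB_p=\ell^2$ is infinite-dimensional, so $\cS^\x$ itself need not be finite-dimensional, and convergence of the coefficient sequence must be handled via weak compactness of bounded sets in $(\ell^2)^m$ rather than norm compactness. One then needs both the interpolation equality and the norm inequality to survive the passage to the weak limit; these ultimately rely on the bicontinuous coefficient-to-function correspondence and on the closedness of $\cS^\x$ in $\BK$, which are both consequences of ({\bf A\ref{A1}}). A secondary subtlety is selecting the error $L$ so the constructed schemes are genuinely acceptable in the sense of {Definition \ref{defiacceptableRLS}}; the remark following that definition renders this automatic when $\cB_q$ is finite-dimensional or equals $\ell^1$, while the Hilbertian case requires a slightly more delicate compactly-continuous surrogate.
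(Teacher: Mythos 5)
Your proposal is correct and takes essentially the same route the paper itself relies on (the paper gives no proof, deferring to the scalar-case Lemmas 4.4 and 4.5 of the $\ell^1$ RKBS paper \cite{Songl^1}): one direction by comparing any $f$ with a minimal norm interpolant of the data $(\x,f(\x))$ and then proving attainment in coefficient space, the other by a $\lambda\to 0$ family of regularized schemes, with weak compactness of bounded coefficient sets and complete continuity of $L$ handling the infinite-dimensional case $p=2$ --- exactly the adaptation the paper's Definition \ref{defiacceptableRLS} is designed for. The only detail you leave implicit, a completely continuous loss on $(\ell^2)^m$ vanishing exactly on the diagonal, is easily supplied, e.g.\ $L(u,\y)=\sum_{i=1}^m\sum_{k\in\bN}2^{-k}|\langle u_i-y_i,e_k\rangle|^2$, which is weakly sequentially continuous on bounded sets.
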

\begin{proof}
  \textbf{To prove the sufficiency}, let $\x=\{x_1,\dots,x_m\}\subseteq X$
  and $\y=(y_1,\dots,y_m)^T\in \cB_q^m$. Also
  let $L,\phi$ and $\lambda$ be as in the {Definition \ref{defiacceptableRLS}} that make (\ref{acceptableRLS}) an acceptable regularized
  learning scheme. For any $F\in\BK$, let $F_0$ be the minimizer of $\inf\limits_{g\in\cI_\x(F(\x))}\|g\|_{\BK}$
  with the form (\ref{LinearRT}). Then $F_0(\x)=F(\x)$ and $\|F_0\|_{\BK}\le \|F\|_{\BK}$.
  As a consequence, by the monotonicity of $\phi$,
 $$
 L(F_0(\x),\y)=L(F(\x),\y))~~\text{and}~~\phi(\|F_0\|_{\BK})\le \phi(\|F\|_{\BK}).
 $$
 It follows that
 $$
 \inf_{f\in\BK}\left\{L(f(\x),\y)+\lambda\phi(\|f\|_{\BK})\right\}=\inf_{f\in\cS^\x}\left\{L(f(\x),\y)+\lambda\phi(\|f\|_{\BK})\right\}.
 $$
 As $\lim\limits_{t\to+\infty} \phi(t)=+\infty$, there exists a large enough positive number $R$ such that
 $$
 \inf_{f\in\cS^\x}\left\{L(f(\x),\y)+\lambda\phi(\|f\|_{\BK})\right\}
 =\inf_{f\in\cS^\x,\atop\|f\|_{\BK}\le R}\left\{L(f(\x),\y)+\lambda\phi(\|f\|_{\BK})\right\}=:\eta.
 $$
  Set $\cF_R:=\{f:f\in\cS^\x~\text{and}~\|f\|_{\BK}\le R\}$. For every $k\in\bN$, there exists
 $f_k=\sum\limits_{i=1}^m {\K}(x_i,\cdot)c_i^k\in \cF_R$ such that
 $$
 L(f_k(\x),\y)+\lambda\phi(\|f_k\|_{\BK})\le \eta+\frac{1}{k}.
 $$
 Since $\|c_i^k\|_p\le R$ for any $i\in\bN_m,k\in\bN$ and $\{c:\|c\|_p\le R\}\subseteq \cB_p$
 is weakly compact, for each $i\in\bN_m$, there is a subsequence
  $\{c_i^{k_n}:n=1,2,\dots\}$ that converges  weakly to some $c_i^0\in\cB_p$.
 Thus, $\|c_i^0\|_p\le\liminf\limits_{n\to\infty} \|{c_i^{k_n}}\|_p$ for every $i\in\bN_m$.
 Therefore, by the assumption ({\bf A\ref{A2}}), for every $j\in\bN_m$,
  $f_0(x_j):=\sum\limits_{i=1}^m {\K}(x_i,x_j)c_i^0$ a weak limit of certain subsequence of $\{f_k(x_j):k=1,2,\dots\}$.

 We conclude that there is a subsequence
 $
   \{f_{k_n}(\x):n=1,2,\dots\}$ that converges weakly to $f_0(\x)\in\cB_q^m~$ and
 $$
   \|f_0\|_{\BK}\le \liminf\limits_{n\to\infty} \|f_{k_n}\|_{\BK}.
 $$
 Note that the functional in the brace of (\ref{acceptableRLS}) is continuous with
 respect to $f\in\BK$ by the assumption on $L,\phi$ and the continuity of the point evaluation functionals on $\BK$. Therefore, $f_0\in\cS^\x$ is a minimizer of (\ref{acceptableRLS}), which implies that $\BK$ satisfies the linear representer theorem for acceptable regularized learning.

 \textbf{Turning to the necessity,} for any $m$ pairwise distinct samplings $\x=\{x_1,\dots,x_m\}\subseteq X$
 and observations $\y=(y_1,\dots,y_m)^T\in \cB_q^m$,
 we choose $\{L_k:k=1,2,\dots\}$ and $\phi$ as
 \begin{equation}\label{L_k}
   L_n(f(\x),\y)=\|P_n(f(\x)-\y)\|_{q,1},\ \ \phi(t)=t,
 \end{equation}
   where $P_k$ is the canonical projection of $\cB_q^m$ which maps its element into the finite-dimensional
   subspace ${\rm span}\{\be^1_q,\be^2_q,\dots,\be^n_q\}$ and thus must be compact.
   Then $L_n$ is completely continuous for each $n$.

 Let $\lambda_n=\frac{1}{n}$ and $f_n\in\cS^\x$ be the minimizer of
 (\ref{acceptableRLS}) with the specified $L_n,\phi,\lambda_n$ in (\ref{L_k}).
 Then $f_n={\K}^\x(\cdot){\bc}_n$ for some ${\bc}_n\in\cB_q^m$.
 And therefore, for any interpolant $g\in\cI_\x(\y)$ and any $n\in\bN$,
 \begin{equation}\label{f_ng}
   \|P_n(f_n(\x)-\y)\|_{q,1}+\lambda_n\|f_n\|_{\BK}\le \|P_n(g(\x)-\y)\|_{q,1}+\lambda_n\|g\|_{\BK}=\frac{\|g\|_{\BK}}{n}.
 \end{equation}
 Since $g(\x)=\y$, we have by (\ref{f_ng}) that
 \begin{equation}\label{f_n le g}
   \|f_n\|_{\BK}\le \|g\|_{\BK}.
 \end{equation}
 If we let
 $$
  f_0={\K}^\x(\cdot){\K}[\x]^{-1}\y,
 $$
  then $f_0\in\cI_\x(\y)\cap \cS^\x$. Also, let $Q_i,~i\in\bN_m$ be the $i$-th coordinate projection
  from $\cB_q^m$ to $\cB_q$. For any $h:=\sum\limits_{l} h_l e^l_p\in\cB_p$,  we have by (\ref{f_ng}) for large enough $n\in\bN$ and every $i\in\bN_m$,
 \begin{equation}\label{f_nweaklyconvergestoy}
  \begin{split}
        &\quad~ |\langle  Q_i(f_n(\x)-\y),h \rangle_q|\\
        & \le |\langle  Q_i(f_n(\x)-P_nf_n(\x)),h \rangle_q|
           +|\langle  Q_iP_n(f_n(\x)-\y),h \rangle_q|+|\langle  Q_i(P_n\y-\y),h \rangle_q|\\
        & \le
      \begin{cases}
        \|f_n(\x)\|_{q,1}\left(\sum\limits_{l=n+1}^\infty |h_l|^p\right)^{\frac{1}{p}}
            +\frac{\|g\|_{\BK}}{n} \|h \|_p+\|\y\|_{q,1}\left(\sum\limits_{l=n+1}^\infty |h_l|^p\right)^{\frac{1}{p}}, & p=q=2\\
        \frac{\|g\|_{\BK}}{n} \|h \|_p, &p\neq q
      \end{cases}.
  \end{split}
 \end{equation}
  Then equation (\ref{f_nweaklyconvergestoy}) implies that $\{f_n(\x):n=1,2,\dots\}$ converges weakly to $\y$ in $\cB_q^m$.
 And therefore,  $\{{\K}[\x]^{-1}f_n(\x):n=1,2,\dots\}$ converges weakly to ${\K}[\x]^{-1}\y$ on $\cB_p^m$.
 Thus,
 \begin{equation*}
   \|{\K}[\x]^{-1}\y\|_{p,1}\le \liminf\limits_{n\to\infty}\|{\K}[\x]^{-1}f_n(\x)\|_{p,1}.
 \end{equation*}
 As a consequence,
   \begin{equation}\label{f_nc_n}
 \begin{split}
     \|f_0\|_{\BK}
   & =\|{\K}[\x]^{-1}\y\|_{p,1}\le \liminf_{n\to\infty}\|{\K}[\x]^{-1}f_n(\x)\|_{p,1}\\
   & = \liminf_{n\to\infty}\|\bc_n\|_{p,1}\le \limsup_{n\to\infty}\|f_n\|_{\BK}.
   \end{split}
 \end{equation}
 Combining (\ref{f_n le g}) and (\ref{f_nc_n}), we obtain $\|f_0\|_{\BK}\le \|g\|_{\BK}$.
 That is, $f_0$ is a solution of (\ref{minimalNI}). The proof is complete.
\end{proof}

Therefore, examining the relationship between assumption ({\bf A\ref{A4}}) and acceptable regularized learning scheme is equivalent to exploring the connection between ({\bf A\ref{A4}}) and minimal norm interpolation problem. The benefit of establishing this equivalence is that the minimal norm interpolation problem appears to be much simpler to handle. The following lemma confirms this observation.

\begin{Lem}\label{lemSxSx^-}
Let $\x=\{x_1,x_2,\dots,x_m\}$ consist of pairwise distinct elements in $X$,
let $x_{m+1}\in X\sm \x$, and set $\overline \x=\x\cup \{x_{m+1}\}$.
The minimal norm interpolant in $\cS^\x$ is the same as the minimal norm interpolant in $\cS^{\overline{\x}}$, that is,
\begin{equation}\label{S^xS^x^-}
  \min_{f\in\cI_\x(\y)\cap\cS^{\x}}\|f\|_{\BK}
  =\min_{f\in\cI_\x(\y)\cap\cS^{\overline\x}}\|f\|_{\BK}~~\text{for~every}~ \y\subset \cB_q^m,
\end{equation}
if and only if the kernel ${\K}$ satisfies (\bf{A\ref{A4}}).
\end{Lem}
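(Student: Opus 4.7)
The plan is to prove both directions through a single coefficient-splitting construction. Writing any $f\in\cS^{\overline{\x}}$ as $f=\sum_{j=1}^{m+1}\K(x_j,\cdot)c_j$, I will replace the last term $\K(x_{m+1},\cdot)c_{m+1}$ by its unique interpolant on $\x$ inside $\cS^{\x}$, whose coefficient vector is $\bd=\K[\x]^{-1}\K_\x(x_{m+1})c_{m+1}$ thanks to the invertibility assumption (\textbf{A\ref{A1}}). The norm identity $\|\sum_j\K(x_j,\cdot)c_j\|_{\BK}=\sum_j\|c_j\|_p$ from (\ref{normBK}) will convert norm inequalities directly into bounds on $\|\K[\x]^{-1}\K_\x(x_{m+1})\|_{p,1}$.

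For the \emph{sufficiency} direction, assume (\textbf{A\ref{A4}}). The inclusion $\cS^{\x}\subseteq\cS^{\overline{\x}}$ gives $\min_{\cS^{\overline\x}\cap\cI_\x(\y)}\|\cdot\|_{\BK}\le\min_{\cS^{\x}\cap\cI_\x(\y)}\|\cdot\|_{\BK}$ for free, so only the reverse inequality requires work. Given any $f=\sum_{j=1}^{m+1}\K(x_j,\cdot)c_j\in\cS^{\overline\x}\cap\cI_\x(\y)$, I would form $g=\sum_{j=1}^m\K(x_j,\cdot)d_j\in\cS^{\x}$ with coefficient vector $\bd=\K[\x]^{-1}\K_\x(x_{m+1})c_{m+1}$ and define $f'=\sum_{j=1}^m\K(x_j,\cdot)(c_j+d_j)\in\cS^{\x}$. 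Because $g$ is built to match $\K(x_{m+1},\cdot)c_{m+1}$ on $\x$, one checks $f'(x_i)=f(x_i)=y_i$ for all $i\in\bN_m$. The triangle inequality together with (\textbf{A\ref{A4}}) then yields
\begin{equation*}
\|f'\|_{\BK}\le\sum_{j=1}^{m}\|c_j\|_p+\|\bd\|_{p,1}\le\sum_{j=1}^{m}\|c_j\|_p+\|c_{m+1}\|_p=\|f\|_{\BK},
\end{equation*}
and taking the infimum over $f$ closes this direction.

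For the \emph{necessity} direction, I would test the hypothesis on a one-parameter family of right-hand sides. Pick $c\in\cB_p$ arbitrary, let $h=\K(x_{m+1},\cdot)c\in\cS^{\overline{\x}}$, and set $\y=h(\x)$. Then $h\in\cI_\x(\y)\cap\cS^{\overline{\x}}$ with $\|h\|_{\BK}=\|c\|_p$, so the right-hand side of (\ref{S^xS^x^-}) is at most $\|c\|_p$. The hypothesis forces the left-hand side to be at most $\|c\|_p$ as well; invertibility (\textbf{A\ref{A1}}) makes $\cS^{\x}\cap\cI_\x(\y)$ a singleton $\{g\}$ whose unique coefficient vector satisfies the interpolation system $\sum_{j=1}^m\K(x_j,x_i)d_j=\K(x_{m+1},x_i)c$ for $i\in\bN_m$, so $\bd=\K[\x]^{-1}\K_\x(x_{m+1})c$. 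Therefore $\|\K[\x]^{-1}\K_\x(x_{m+1})c\|_{p,1}=\|g\|_{\BK}\le\|c\|_p$, and the supremum over $c\neq\0$ recovers (\textbf{A\ref{A4}}).

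The main obstacle I anticipate is notational rather than conceptual: identifying cleanly which operator matrix on coefficient tuples performs the interpolation (keeping track of whether one reads $\K(x_i,x_j)$ or $\K(x_j,x_i)$) and confirming that the tuple $\K[\x]^{-1}\K_\x(x_{m+1})c$ is indeed the unique solution of that system with the target values $(\K(x_{m+1},x_i)c)_i$. Once this identification is secured by (\textbf{A\ref{A1}}), what remains is the triangle inequality in $l_{p,1}(\bN_m)$ and the compatibility estimate (\ref{Matrixnormcompatible}), both of which are already in hand.
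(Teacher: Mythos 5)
Your proof is correct and is essentially the argument the paper intends (the vector-valued version of the scalar-case proof it builds on): sufficiency by replacing $\K(x_{m+1},\cdot)c_{m+1}$ with its interpolant from $\cS^{\x}$, whose coefficient vector $\K[\x]^{-1}\K_\x(x_{m+1})c_{m+1}$ is controlled by ({\bf A\ref{A4}}) and the triangle inequality in $l_{p,1}$, and necessity by testing the hypothesis on $h=\K(x_{m+1},\cdot)c$ and using that ({\bf A\ref{A1}}) makes the interpolant in $\cS^{\x}$ unique. The index-transpose bookkeeping you flag is handled by the same tacit identification the paper itself uses (compare the choice $f(x)={\K}^\x(x){\K}[\x]^{-1}g(\x)$ in the proof of Theorem~\ref{thmMNI}, and note it is vacuous for the symmetric kernels of Section~5), so it does not amount to a gap relative to the paper.
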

\begin{proof}
Every $f\in \cI_\x(\y)\cap\cS^\x$ has the form
$$
f(x)={\K}^\x(x)\cdot{\bc}=\sum\limits_{i=1}^m{\K}(x_i,x)c_i,~x\in X
$$
and
$$
 y_k=f(x_k)=\sum\limits_{i=1}^m{\K}(x_i,x_k)c_i,~\text{for~all~}k\in\bN_m.
$$
Combining the above two equations, one obtains $\y={\K}[\x]{\bc}$.
Similarly, let $g\in \cI_\x(\y)\cap\cS^{\overline\x}$ and $b:=g(x_{m+1})$,
then $g(x)={\K}^\x(x)\cdot{\bd}=\sum\limits_{i=1}^{m+1}{\K}(x_i,x)d_i,~x\in X$ and
$\left({\by\atop b}\right)={\K}[\overline\x]{\bd}$.

Since ${\K}[\x]$ and ${\K}[\overline\x]$ are both invertible, the operator
$$
\bs:={\K}(x_{m+1},x_{m+1})-{\K}^\x(x_{m+1}){\K}[\x]^{-1}{\K}_\x(x_{m+1})\in\cL(\cB_p,\cB_q)
$$
is invertible.
Furthermore, if we let $\overline\y=\left({\by\atop b}\right)\in\cB_q^{m+1}$, then by (\ref{matrixinversion}),
\begin{equation}\label{K(x)y+1}
  \begin{split}
  &\ \quad{\K}[\overline\x]^{-1}\overline\y\\
  &=\left[
      \begin{array}{cc}
        {\K}[\x] & {\K}_\x(x_{m+1}) \\
        {\K}^\x(x_{m+1}) & {\K}(x_{m+1},x_{m+1}) \\
      \end{array}
    \right]^{-1} \left(
                   \begin{array}{c}
                     \y \\
                     b \\
                   \end{array}
                 \right)\\
  &=\left[
      \begin{array}{cc}
        {\K}[\x]^{-1}+{\K}[\x]^{-1}{\K}_\x(x_{m+1})\bs^{-1}{\K}^\x(x_{m+1}){\K}[\x]^{-1} & -{\K}[\x]^{-1}{\K}_\x(x_{m+1})\bs^{-1} \\
        -\bs^{-1}{\K}^\x(x_{m+1}){\K}[\x]^{-1} & \bs^{-1} \\
      \end{array}
    \right] \left(
                   \begin{array}{c}
                     \y \\
                     b \\
                   \end{array}
            \right)\\
  &=\left(
           \begin{array}{c}
           {\K}[\x]^{-1}\y+{\K}[\x]^{-1}{\K}_\x(x_{m+1})\bs^{-1}\bt \\
            -\bs^{-1}\bt \\
           \end{array}
   \right),
  \end{split}
\end{equation}
where $\bt:={\K}^\x(x_{m+1}){\K}[\x]^{-1}\y-b\in\cB_q.$

We are now in a position to show the sufficiency.
If the assumption ({\bf A\ref{A4}}) holds true, then by (\ref{K(x)y+1}),
\begin{equation*}
\begin{split}
  \|g\|_{\BK}
&= \|{\K}[\overline\x]^{-1}\overline\y\|_{p,1}\\
&\ge \|{\K}[\x]^{-1}\y\|_{p,1}-\|{\K}[\x]^{-1}{\K}_\x(x_{m+1})\bs^{-1}\bt\|_{p,1}+\|\bs^{-1}\bt\|_{p}\\
&\ge \|{\K}[\x]^{-1}\y\|_{p,1}-\|{\K}[\x]^{-1}{\K}_\x(x_{m+1})\|_{p,1}\cdot\|\bs^{-1}\bt\|_{p}+\|\bs^{-1}\bt\|_{p}\\
&\ge \|{\K}[\x]^{-1}\y\|_{p,1}.
\end{split}
\end{equation*}
That is, for every $x_{m+1}\in X\sm \x$,
$$
 \min_{f\in\cI_\x(\y)\cap\cS^{\overline\x}} \|f\|_{\BK}
 \ge \min_{f\in\cI_\x(\y)\cap\cS^\x} \|f\|_{\BK}~~\text{for~all}~\y\in\cB_q^m.
$$
Because $\cS^\x\subseteq \cS^{\overline\x}$, the reverse inequality is straightforward. Therefore, (\ref{S^xS^x^-}) holds.

To prove the necessity, we know by (\ref{S^xS^x^-}) that for any $\y\in\cB_q^m,b\in\cB_q$,
\begin{equation}\label{KxKx^-}
  \|{\K}[\overline\x]^{-1}\overline\y\|_{p,1}\ge \|{\K}[\x]^{-1}\y\|_{p,1}.
\end{equation}
For any $c\in \cB_p$ with $\|c\|_p=1$, if we choose
$$
\y={\K}_\x(x_{m+1})c\in\cB_q^m,~b=[{\K}^\x(x_{m+1}){\K}[\x]^{-1}{\K}_\x(x_{m+1})+\bs]c\in\cB_q.
$$
Then by (\ref{K(x)y+1}),
\begin{equation}\label{e_j}
   \|{\K}[\overline\x]^{-1}\overline\y\|_{p,1}
 =\left\|\begin{array}{c}
    \0 \\
     c
   \end{array}
 \right\|_{p,1}=1.
\end{equation}
Therefore, by (\ref{KxKx^-}) and (\ref{e_j}),
\begin{equation*}
   \begin{split}
     1  &=\sup_{\|c\|_p=1}\|{\K}[\overline\x]^{-1}\overline\y\|_{p,1}
            \ge\sup_{\|c\|_p=1}\|{\K}[\x]^{-1}\y\|_{p,1}\\
        &=\sup_{\|c\|_p=1}\|{\K}[\x]^{-1}{\K}_\x(x_{m+1})c\|_{p,1}\\
        &=\|{\K}[\x]^{-1}{\K}_\x(x_{m+1})\|_{p,1}.
   \end{split}
\end{equation*}
This yields ({\bf A\ref{A4}}) and thus completes the proof.
\end{proof}

\begin{Thm}\label{thmMNI}
Every minimal norm interpolant of (\ref{minimalNI}) in $\BK$
satisfies the linear representer theorem if and only if ({\bf A\ref{A4}}) holds true.
\end{Thm}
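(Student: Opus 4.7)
The plan is to combine an iteration of Lemma~\ref{lemSxSx^-} with a truncation-and-correction argument, using (\textbf{A\ref{A1}}) to pin down the structure of $\cS^\x\cap\cI_\x(\y)$. For the ``$\Leftarrow$'' direction, I would first observe that by (\textbf{A\ref{A1}}) the map $\sum_{i=1}^m\K(x_i,\cdot)c_i\mapsto \K[\x]\,(c_i)_{i=1}^m$ from $\cS^\x$ to $\cB_q^m$ is a bijection, so $\cS^\x\cap\cI_\x(\y)$ is a singleton $\{\tilde h\}$ with $\tilde h=\sum_{i=1}^m\K(x_i,\cdot)\tilde c_i$ and $(\tilde c_i)_{i=1}^m=\K[\x]^{-1}\y$. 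It therefore suffices to prove $\|\tilde h\|_{\BK}\le\|f_{\min}\|_{\BK}$ for a minimal norm interpolant $f_{\min}$. A straightforward induction on $k$, applying Lemma~\ref{lemSxSx^-} together with the affine decomposition $\cS^{\x_{k+1}}\cap\cI_\x(\y)=\bigcup_{\eta\in\cB_q^k}\cS^{\x_{k+1}}\cap\cI_{\x_k}((\y,\eta))$ to transfer the constraint one point at a time, yields
\[
\|\tilde h\|_{\BK}=\min_{f\in\cI_\x(\y)\cap\cS^{\x_k}}\|f\|_{\BK}
\]
for every finite $\x_k=\x\cup\{z_1,\dots,z_k\}$ of pairwise distinct points with $z_j\notin\x$.

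Next I would expand $f_{\min}=\sum_{x\in T}\K(x,\cdot)c_x$ with $T$ countable, enumerate $T\sm\x=\{z_1,z_2,\dots\}$, and form the truncation $h_k=\sum_{x\in\x}\K(x,\cdot)c_x+\sum_{j=1}^k\K(z_j,\cdot)c_{z_j}\in\cS^{\x_k}$ (padding with zero coefficients if some $x_i\notin T$). Then $\|h_k-f_{\min}\|_{\BK}\to 0$, and Proposition~\ref{Proppointevaluation} gives $h_k(\x)\to\y$ in $\cB_q^m$. To upgrade $h_k$ into an exact interpolant, set $d_k=\K[\x]^{-1}(\y-h_k(\x))\in\cB_p^m$ and $g_k=h_k+\sum_{i=1}^m\K(x_i,\cdot)(d_k)_i\in\cS^{\x_k}\cap\cI_\x(\y)$. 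Since $\K[\x]^{-1}$ is a bounded operator by (\textbf{A\ref{A1}}), one has $\|g_k-h_k\|_{\BK}=\|d_k\|_{p,1}\to 0$, so $\|g_k\|_{\BK}\to\|f_{\min}\|_{\BK}$. Combined with the displayed identity, $\|\tilde h\|_{\BK}\le\|g_k\|_{\BK}$ and passing to the limit gives $\|\tilde h\|_{\BK}\le\|f_{\min}\|_{\BK}$, so $\tilde h$ is itself a minimal norm interpolant lying in $\cS^\x$.

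For the ``$\Rightarrow$'' direction, I would argue by contraposition: if (\textbf{A\ref{A4}}) fails, Lemma~\ref{lemSxSx^-} supplies $\x$, $x_{m+1}$, and $\y$ with $\inf_{\cI_\x(\y)\cap\cS^{\overline\x}}\|\cdot\|_{\BK}<\|\tilde h\|_{\BK}$, so some $f^*\in\cS^{\overline\x}\cap\cI_\x(\y)$ has $\|f^*\|_{\BK}<\|\tilde h\|_{\BK}$. Any minimal norm interpolant then has norm strictly less than $\|\tilde h\|_{\BK}$ and therefore cannot equal the unique element of $\cS^\x\cap\cI_\x(\y)$, contradicting the representer theorem. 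The main obstacle is the approximation step in the forward direction: I must verify that the correction $\|g_k-h_k\|_{\BK}$ vanishes, which rests squarely on (\textbf{A\ref{A1}}) furnishing a bounded $\K[\x]^{-1}$, and I must cleanly handle the two-level iteration of Lemma~\ref{lemSxSx^-} since that lemma adds only one point at a time while the interpolation constraint is fixed at $\x$. The rest is bookkeeping around the singleton description of $\cS^\x\cap\cI_\x(\y)$.
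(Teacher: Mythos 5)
Your proposal is correct and follows essentially the same route as the paper: both directions rest on Lemma~\ref{lemSxSx^-}, with sufficiency obtained by iterating that lemma over finitely many added points and then passing to the limit using the boundedness of ${\K}[\x]^{-1}$ from ({\bf A\ref{A1}}) and the continuity of point evaluations. The only differences are cosmetic bookkeeping: you fix the constraint at $\x$ and iterate upward via the union over $\eta$, then truncate $f_{\min}$ itself and correct it to an exact interpolant, whereas the paper iterates downward on a finite representation of an arbitrary interpolant $g$ (using $g$'s own values at the extra points) and then approximates a general $g\in\cI_\x(\y)$ by elements of $\BK^0$, comparing with ${\K}^\x(\cdot){\K}[\x]^{-1}g_k(\x)$.
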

\begin{proof} We start with the necessity part. Note that the minimal norm interpolant in (\ref{minimalNI}) satisfying the linear representer theorem if and only if
\begin{equation}\label{g=f}
   \min_{g\in\cI_\x(\y)} \|g\|_{\BK} = \min_{f\in\cI_\x(\y) \cap \cS^\x} \|f\|_{\BK}.
\end{equation}
Hence, if the above equality holds, then since
\(\cI_\x(\y) \cap \cS^\x \subseteq \cI_\x(\y) \cap \cS^{\overline{\x}} \subseteq \cI_\x(\y)\),
we derive (\ref{S^xS^x^-}), and by applying Lemma \ref{lemSxSx^-},
the assumption ({\bf A\ref{A4}}) is satisfied for every \(x_{m+1} \in X \setminus \x\).

 Turning to the sufficiency, we notice
 $$
  \min_{g\in\cI_\x(\y)} \|g\|_{\BK}\le \min_{f\in\cI_\x(\y)\cap \cS^\x} \|f\|_{\BK}.
 $$
 We have to show that the reverse of the above inequality. To this end, for any $g\in\cI_{\x}(\y)\cap \cB_0$, we can express $g$ as
 $g=\sum\limits_{i=1}^n {\K}(x_i,\cdot)c_i$ for some $n\ge m$ and pairwise distinct $x_i\in X,~c_i\in\cB_p,~i\in\bN_n$.
 This is true since we can always add extra samplings from $X\sm\x$ by setting the corresponding coefficients
 $c_i$ to be zero, and relabeling if necessary. Let $y_j=g(x_j),m+1\le j\le n$ and
 $$
  \x_l:=(x_i:i\in\bN_l)^T,~~\y_l:=(y_i:i\in\bN_l)^T~~\text{for~every}~m\le l\le n.
 $$
 Note that $\x=\x_m$ and $\y=\y_m$ and $g\in\cI_{\x_n}(\y_n)\cap \cS^{\x_n}$. Therefore,
 $$
  \|g\|_{\BK}\ge \min_{f\in \cI_{\x_n}(\y_n)\cap \cS^{\x_n}} \|f\|_{\BK}.
 $$
 Also, by Lemma \ref{lemSxSx^-} and $\cI_{\x_n}(\y_n)\subseteq \cI_{\x_{n-1}}(\y_{n-1})$,
 $$
  \min_{f\in\cI_{\x_n}(\y_n)\cap \cS^{\x_n}}\|f\|_{\BK}
  \ge \min_{f\in\cI_{\x_{n-1}}(\y_{n-1})\cap \cS^{\x_n}} \|f\|_{\BK}
  =\min_{f\in\cI_{\x_{n-1}}(\y_{n-1})\cap \cS^{\x_{n-1}}} \|f\|_{\BK}.
 $$
 Thus, we have
 $$
  \|g\|_{\BK}\ge \min_{f\in \cI_{\x_{n-1}}(\y_{n-1})\cap \cS^{\x_{n-1}}} \|f\|_{\BK}.
 $$
 One can repeat this process until (\ref{g=f}) holds true for $g\in\cI_{\x}(\y)\cap \cB_0$.

 For a general $g\in\cI_{\x}(\y)$, let $\{g_k\in \cB_0:k\in\bN\}$ be the sequence that converges to $g$ in $\BK$.
 If we take $f,f_k\in\cS^\x$ as follows
 $$
  f(x)={\K}^\x(x){\K}[\x]^{-1}g(\x)~~\text{and}~~ f_k(x)={\K}^\x(x){\K}[\x]^{-1}g_k(\x),~~k\in\bN.
 $$
 Since $\|g_k-g\|_{\BK}\to 0$ as $k\to\infty$ and the point evaluation functionals are continuous on $\BK$,
 $g_k(x_i)\to g(x_i)$ for $i\in \bN_m$ as $k\to\infty$. As a consequence,
 \begin{equation*}
 \begin{split}
    \lim\limits_{k\to\infty}\|f-f_k\|_{\BK}
   &= \lim\limits_{k\to\infty}\left\|{\K}[\x]^{-1}(g(\x)-g_k(\x))\right\|_{p,1}=0.
 \end{split}
 \end{equation*}
  Since we already knew that $\|g_k\|_{\BK}\ge \|f_k\|_{\BK}$ for all $k\in\bN$, by taking the limit about $k$, it holds
  $$
  \|g\|_{\BK}\ge \|f\|_{\BK}~~~\text{for~any}~g\in\cI_\x(\y).
  $$
   The proof is complete.
\end{proof}

Combining Theorem \ref{theoremwithoutproof} with Theorems \ref{theoremARLequivalenttoMNI} and \ref{thmMNI},
we have the following corollary for any $p\in [1,+\infty]$.
\begin{corollary}\label{corA4}
Let ${\K}:X\times X\to\cL(\cB_p,\cB_q)$ satisfy
({\bf A\ref{A1}})-({\bf A\ref{A3}}) as in Definition \ref{defiadmissible}. Then it induces a RKBS $\BK$ and
the following three statements are equivalent:
\begin{enumerate}
\renewcommand{\labelenumi}{(\alph{enumi})}
  \item The kernel ${\K}$ satisfies the assumption ({\bf A\ref{A4}}).
  \item\label{ARL min} Every acceptable regularized learning scheme in $\BK$ of the
                          form (\ref{acceptableRLS}) has a minimizer of the form (\ref{LinearRT}).
  \item\label{MNI min} Every minimal norm interpolant (\ref{minimalNI}) in $\BK$ satisfies the linear representer theorem.
\end{enumerate}
\end{corollary}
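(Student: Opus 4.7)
The proof is essentially an assembly of the three results that precede it, so my plan is to invoke each in turn rather than redo any work. First I would appeal to Theorem \ref{theoremwithoutproof}: since $\K$ satisfies ({\bf A\ref{A2}}) and ({\bf A\ref{A3}}), the space $\BK$ defined in (\ref{BK}) with norm (\ref{normBK}) is a genuine RKBS of vector-valued functions from $X$ to $\cB_q$ with $\K$ as its reproducing kernel, and the bilinear form on $\BK\times\BK^\#$ carries the reproducing property (\ref{bilinearfg}). This legitimizes the very objects appearing in statements (b) and (c) --- the acceptable regularized learning schemes (\ref{acceptableRLS}) and the minimal norm interpolants (\ref{minimalNI}) --- so that asking about minimizers of form (\ref{LinearRT}) makes sense.

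Next I would chain the two biconditionals already established in the section. Theorem \ref{thmMNI} yields (a) $\Leftrightarrow$ (c) directly: every minimal norm interpolant of (\ref{minimalNI}) in $\BK$ lies in $\cS^\x$ if and only if ({\bf A\ref{A4}}) holds. Theorem \ref{theoremARLequivalenttoMNI} yields (b) $\Leftrightarrow$ (c): $\BK$ satisfies the linear representer theorem for acceptable regularized learning precisely when it does so for minimal norm interpolation. Composing the two equivalences gives (a) $\Leftrightarrow$ (b) $\Leftrightarrow$ (c), which is the claim.

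There is no genuine obstacle here; the only point worth stating carefully in the final write-up is the role of each assumption. Specifically, ({\bf A\ref{A2}}) and ({\bf A\ref{A3}}) are consumed by Theorem \ref{theoremwithoutproof} in order to build the RKBS and validate the reproducing property; ({\bf A\ref{A1}}) enters implicitly through Lemma \ref{lemSxSx^-} and the sufficiency direction of Theorem \ref{thmMNI}, where the invertibility of $\K[\x]$ is what makes the limiting argument with $f(x)=\K^\x(x)\K[\x]^{-1}g(\x)$ meaningful. Thus the corollary is the cleanest packaging of the chapter's logic: ({\bf A\ref{A1}})--({\bf A\ref{A3}}) supply the ambient structure, while ({\bf A\ref{A4}}) is the exact additional ingredient equivalent to either form of the linear representer theorem.
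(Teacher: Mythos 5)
Your proposal is correct and matches the paper's own argument: the corollary is obtained exactly by combining Theorem \ref{theoremwithoutproof} (which uses ({\bf A\ref{A2}})--({\bf A\ref{A3}}) to produce the RKBS $\BK$) with the chain of equivalences from Theorem \ref{thmMNI} (giving (a) $\Leftrightarrow$ (c)) and Theorem \ref{theoremARLequivalenttoMNI} (giving (b) $\Leftrightarrow$ (c)). Your remark on where ({\bf A\ref{A1}}) enters, via Lemma \ref{lemSxSx^-} and the limiting construction in Theorem \ref{thmMNI}, is a correct and welcome clarification but not a departure from the paper's route.
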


We remark that similar arguments can prove that if $\K$ satisfies ({\bf A\ref{A4}}),
then $\BK^\#$ also satisfies the linear representer theorem for acceptable regularized learning. Let us conclude this section with the following theorem.
\begin{Thm}\label{thmadmissilbe}
If ${\K}$ is an admissible kernel on $X\times X$, then $\BK$ and $\BK^\#$ as defined in Section 3 are both vector-valued RKBSs on
$X$. And the bilinear form $(\cdot,\cdot)_{\K}$ satisfies the reproducing property (\ref{bilinearfg}) and the Cauchy inequality (\ref{Cauchyinequality}). Furthermore, every acceptable regularized learning scheme as in Definition \ref{defiadmissible} possesses a minimizer $f_0$ of the form
$$
f_0(x)=\sum_{i=1}^m {\K}(x_i,x)c_i,~~x\in X
$$
for some $c_i\in\cB_p,i\in\bN_m$.

The converse is also true, that is, for the constructed spaces $\BK$ and $\BK^\#$ to
enjoy the above properties, ${\K}$ must be an admissible kernel on $X\times X$.
\end{Thm}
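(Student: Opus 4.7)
The plan is to recognize this theorem as a consolidation of the results already built up in Sections 3 and 4: the forward direction is essentially a repackaging of prior theorems, while the substantive content lies in the converse.

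For the forward direction, assume $\K$ is admissible. Theorem \ref{theoremwithoutproof}, which requires only (A2) and (A3), immediately gives that $\BK$ and $\BK^\#$ are RKBSs of vector-valued functions from $X$ to $\cB_q$ with reproducing kernel $\K$, that the bilinear form (\ref{bilinearfinite}) extends to $\BK\times\BK^\#$ satisfying (\ref{bilinearfg}), and that the Cauchy inequality (\ref{Cauchyinequality}) holds. With (A1) and (A4) also in hand, Corollary \ref{corA4} applies and guarantees that every acceptable regularized learning scheme in $\BK$ admits a minimizer of the linear representer form (\ref{LinearRT}).

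For the converse, assume the constructed $\BK$ and $\BK^\#$ enjoy the listed properties, and extract each of (A1)--(A4) in turn. Assumption (A3) is forced because (\ref{normBK}) must be a well-defined norm on $\BK$: a nontrivial null combination $\sum_i\K(x_i,\cdot)c_i\equiv\0$ with pairwise distinct $x_i$ and not all $c_i=\0$ would assign the zero function the two conflicting norm values $0$ and $\sum_i\|c_i\|_p>0$. Assumption (A2) is forced because (\ref{normBK*}) must be finite on every element; applied to the single-term $\K(\cdot,x)c$ this yields a pointwise bound $\sup_y\|\K(y,x)c\|_q<\infty$, and a Banach--Steinhaus argument on the family $\{c\mapsto\K(\cdot,x)c:x\in X\}\subset\cL(\cB_p,\BK^\#)$, combined with continuity of point evaluation on $\BK^\#$, upgrades it to the uniform bound $\kappa$ that (A2) requires. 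Assumption (A4) is immediate from the equivalence (a)$\Leftrightarrow$(b) of Corollary \ref{corA4}, since the hypothesized representer property for every acceptable scheme is exactly statement (b) of that corollary. Assumption (A1) requires the most care: if $\K[\x]$ were not invertible in $\cL(\cB_p,\cB_q)^{m\times m}$, one could produce data $\y\in\cB_q^m$ outside the image of the evaluation map $\cS^\x\to\cB_q^m$, and then an acceptable scheme whose loss $L$ enforces exact interpolation in the small-$\lambda$ limit would have no minimizer of the representer form, contradicting the hypothesis; the analogous argument on $\BK^\#$ yields the dual-side one-sided inverse, and together these produce the two-sided inverse demanded by the formal statement of (A1).

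The hardest step I anticipate is the converse extraction of (A1): one must translate global existence of representer-form minimizers, across every acceptable scheme and every choice of $\y$, into full bijectivity of $\K[\x]$ as an operator matrix, and then assemble the dual-side inverse. The (A2) derivation is also slightly subtle because of the uniform boundedness upgrade from pointwise to joint estimates. Once these two are in place, (A3) and (A4) follow essentially tautologically from the definitions and from Corollary \ref{corA4}, and the whole theorem reduces to a bookkeeping combination of Theorem \ref{theoremwithoutproof} and Corollary \ref{corA4}.
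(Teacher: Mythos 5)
Your forward direction is exactly the paper's intended argument: the theorem is stated as a summary, with the RKBS/bilinear-form/Cauchy part coming from Theorem \ref{theoremwithoutproof} under ({\bf A\ref{A2}})--({\bf A\ref{A3}}), and the representer-theorem part from Corollary \ref{corA4} (i.e.\ Theorems \ref{theoremARLequivalenttoMNI} and \ref{thmMNI}) under ({\bf A\ref{A1}})--({\bf A\ref{A4}}); the paper gives no further proof, and for the converse it is really only relying on the displayed equivalence ``({\bf A\ref{A3}}) $\Leftrightarrow$ $\|\cdot\|_{\BK}$ well defined and $\BK$ a pre-RKBS'' and on Theorem \ref{thmMNI}/Corollary \ref{corA4} for ({\bf A\ref{A4}}). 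Your extraction of ({\bf A\ref{A3}}) matches this. The problems are in your treatment of ({\bf A\ref{A2}}) and ({\bf A\ref{A1}}).

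For ({\bf A\ref{A2}}), the Banach--Steinhaus step does not go through as described: to apply it to the family $S_x:c\mapsto \K(\cdot,x)c$ in $\cL(\cB_p,\BK^\#)$ you need, for each fixed $c$, the pointwise bound $\sup_{x\in X}\|S_xc\|_{\BK^\#}=\sup_{x,x'\in X}\|\K(x',x)c\|_q<\infty$, and none of the hypothesized properties supplies this. What membership in $\BK^\#$ gives is finiteness of $\sup_{x'}\|\K(x',x)c\|_q$ for each \emph{fixed} $x$, and continuity of point evaluations $\delta_{x'}$ on $\BK$ gives $\|\K(x,x')\|_{\cL(\cB_p,\cB_q)}\le\|\delta_{x'}\|$ for all $x$ --- in both cases a bound uniform in one variable with a constant depending on the other; continuity of point evaluation on $\BK^\#$ only says $\|g(x)\|_q\le\|g\|_{\BK^\#}$ and does not bridge the gap to a single $\kappa$ valid for all pairs. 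For ({\bf A\ref{A1}}), there is both a circularity and a failing step: Corollary \ref{corA4} is stated under the hypothesis that $\K$ already satisfies ({\bf A\ref{A1}})--({\bf A\ref{A3}}), so you cannot invoke its (a)$\Leftrightarrow$(b) to recover ({\bf A\ref{A4}}) before ({\bf A\ref{A1}}) is established; and your route to ({\bf A\ref{A1}}) does not force a contradiction, since a minimizer of a regularized scheme with $\lambda>0$ need not interpolate $\y$, so failure of the evaluation map on $\cS^\x$ to hit a given $\y$ does not preclude representer-form minimizers. Even granting that, surjectivity/injectivity of the evaluation map yields solvability of a linear system, not the two-sided operator inverse $\K'[\x]\in\cL(\cB_q,\cB_p)^{m\times m}$ with both identities that ({\bf A\ref{A1}}) formally demands; assembling that from the hypothesized properties is a genuine missing argument, not a bookkeeping step.
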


\section{Admissible Kernels}
We have seen that admissible kernels are fundamental to our construction. We present examples of admissible kernels in this section.

Recall the term $\|{\K}[\x]^{-1}{\K}_\x(x)\|_{p,1}$ in ({\bf A\ref{A4}}),
which is usually referred to as the Lebesgue constant \cite{Lebesgueconstant} of the kernel ${\K}$. It measures the stability of the kernel-based interpolation \cite{DeMarchi}.

 Define
\begin{equation*}
  \Lambda^s({\G}):=\sup_{{\bf w}\subseteq X}\Lambda^s_{\bf w}({\G})
  =\sup_{{\bf w}\subseteq X}\sup_{x\in X}\|{\G}[{\bf w}]^{-1}{\G}_{\bf w}(x)\|_{s}
\end{equation*}
to be the Lebesgue constant of the kernel $\G:X\times X\to\bC$, where ${\bf w}$ is a finite subset of $X$ and $\|\cdot\|_s$ is some specified norm. We desire for kernels $\G$ such that
$$
\Lambda^{p,1}_\x({\G})\le 1~~\text{for~every~pairwise~distinct~}\x\subseteq X.
$$

It is shown in \cite{Songl^1} that both the Brownian bridge kernel
 $$
  K_{\min}(x,x')=\min\{x,x'\}-xx',~~x,x'\in (0,1)
 $$
 and the exponential kernel
  $$
   K_{\exp}(x,x')=\exp(-|x-x'|),~~x,x'\in \bR
  $$
    are admissible scalar-valued kernels.
    Here we present a new family of admissible scalar-valued kernels.
    They can then be utilized to construct admissible operator-valued kernels
       \begin{equation}\label{KA}
            {\G}(x,x')=G(x,x')\bA,
       \end{equation}
      for our purpose, where $G:X\times X\to\bC$ is an admissible scalar-valued kernel and $\bA$ denotes a positive definite matrix.

\subsection{New Admissible Kernels}
The first new family of scalar-valued admissible Kernels is
\begin{equation}\label{K_t}
  K_{t}(x,y)=\min\{x,y\}-t\,xy, ~~\text{where}~x,y\in (0,1),~-1\le t\le 1.
\end{equation}
It contains the Brownian bridge kernel $K_{\min}$ when $t=1$.
When $t=0$, it is the covariance of the Brownian motion.

\begin{Prop}\label{K_t admissible}
The family of functions $K_t$ in (\ref{K_t}) are admissible kernels.
\end{Prop}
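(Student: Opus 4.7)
The plan is to verify conditions (A1)--(A4) of Definition \ref{defiadmissible} for the scalar family $K_t(x,y)=\min\{x,y\}-txy$ on $(0,1)$, pivoting everything on a Sherman--Morrison decomposition that reduces the problem to the Brownian motion kernel $K_0(x,y)=\min\{x,y\}$. Boundedness (A2) is trivial, since $|K_t(x,y)|\le 1+|t|\le 2$.

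After sorting the sampling points so that $x_1<\cdots<x_m$, set $\bx=(x_1,\ldots,x_m)^T$ and $M=[\min\{x_i,x_j\}]_{i,j}$, so that $K_t[\x]=M-t\bx\bx^T$. The key algebraic observation is $M\be_m=\bx$ (which holds because $x_i\le x_m$ for every $i$), giving $M^{-1}\bx=\be_m$ and $\bx^T M^{-1}\bx=x_m$. Since $tx_m<1$ when $-1\le t\le 1$, Sherman--Morrison then yields
\begin{equation*}
K_t[\x]^{-1}=M^{-1}+\frac{t}{1-tx_m}\be_m\be_m^T,
\end{equation*}
which, combined with the well-known tridiagonal inverse of $M$ (with entries determined by the gaps $x_{i+1}-x_i$), establishes (A1). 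Condition (A3) follows by writing $\sum_i K_t(x_i,x)c_i=\sum_i\min\{x_i,x\}c_i - tx\sum_i x_ic_i$: if this vanishes identically on $(0,1)$, the first sum must be affine in $x$, but its slope-in-$x$ is the piecewise-constant function $\sum_{x_i>x}c_i$, whose jump at each $x_k$ is $-c_k$, forcing $c_k=0$ for every $k$.

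The crux is (A4). Writing $K_{t,\x}(x_{m+1})=M_\x(x_{m+1})-tx_{m+1}\bx$ and applying the explicit inverse above, the identity $M^{-1}\bx=\be_m$ causes cross-terms to telescope and produces
\begin{equation*}
K_t[\x]^{-1}K_{t,\x}(x_{m+1})=M^{-1}M_\x(x_{m+1})+\frac{t\bigl(\min\{x_m,x_{m+1}\}-x_{m+1}\bigr)}{1-tx_m}\be_m.
\end{equation*}
I would then split on whether $x_{m+1}<x_m$ or $x_{m+1}>x_m$. In the first subcase the correction term vanishes identically, reducing the estimate to $\|M^{-1}M_\x(x_{m+1})\|_1\le 1$; this is verified by a direct computation that identifies $\bc:=M^{-1}M_\x(x_{m+1})$ with the coefficient vector of the unique piecewise-linear interpolant of $K_0(\cdot,x_{m+1})$ at the nodes $x_1,\ldots,x_m$, which for $x_{m+1}\in(x_k,x_{k+1})$ (with $x_0:=0$) is supported on only coordinates $k,k+1$ with nonnegative entries $(x_{k+1}-x_{m+1})/(x_{k+1}-x_k)$ and $(x_{m+1}-x_k)/(x_{k+1}-x_k)$ summing to exactly $1$. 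In the second subcase $M_\x(x_{m+1})=\bx$, so $M^{-1}M_\x(x_{m+1})=\be_m$, and the entire expression collapses to $\frac{1-tx_{m+1}}{1-tx_m}\be_m$, of $\ell^1$ norm $\bigl|\frac{1-tx_{m+1}}{1-tx_m}\bigr|$.

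The main obstacle is controlling this last ratio. For $0\le t\le 1$ the estimate is immediate, since $0<1-tx_{m+1}<1-tx_m$; for negative $t$ the ratio takes the form $(1+|t|x_{m+1})/(1+|t|x_m)$, which naively exceeds $1$ as soon as $x_{m+1}>x_m$, so a more delicate argument (or an implicit narrowing of the admissible range of $t$) will be needed at this step. Once this ratio is brought under control, combining the two subcases finishes (A4), and together with (A1)--(A3) this establishes admissibility of $K_t$.
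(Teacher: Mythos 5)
Your route is genuinely different from the paper's. The paper's own proof is very short: it computes $\det K_t[\x]=x_1(1-tx_m)(x_2-x_1)\cdots(x_m-x_{m-1})$ to get ({\bf A\ref{A1}$'$}), observes the uniform bound $2$ for ({\bf A\ref{A2}$'$}), and then disposes of ({\bf A\ref{A3}$'$}) and ({\bf A\ref{A4}$'$}) by asserting they follow ``by the same reasoning as in \cite[Proposition 5.1]{Songl^1}''. Your Sherman--Morrison reduction $K_t[\x]=M-t\bx\bx^T$ with $M^{-1}\bx={\bf e}_m$, the jump argument for ({\bf A\ref{A3}$'$}), and the identification of $M^{-1}M_\x(x_{m+1})$ with the coefficients of the piecewise-linear interpolant are all correct, and they make the structure of the Lebesgue-constant bound completely explicit: for $0\le t\le 1$ your two cases give norms exactly $1$ (or $x_{m+1}/x_1\le 1$ when $x_{m+1}<x_1$) and $(1-tx_{m+1})/(1-tx_m)\le 1$, so for $t\in[0,1]$ you have a complete, self-contained proof, which is more informative than the paper's citation.

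The obstacle you flag for $t<0$, however, is not a step awaiting ``a more delicate argument'': your Case 2 formula is an outright counterexample to ({\bf A\ref{A4}$'$}) for every $t\in[-1,0)$. Concretely, take $m=1$, $x_1=1/2$, $x_2=9/10$, $t=-1$: then $K_t[\x]=3/4$, $K_t(x_2,x_1)=19/20$, and $\|K_t[\x]^{-1}K_t(x_2,x_1)\|_1=19/15>1$. The reasoning of \cite{Songl^1} for the Brownian bridge kernel relies on the decay $1-x_{m+1}\le 1-x_m$, i.e.\ precisely on $t\ge 0$, so it cannot be transplanted to negative $t$; the proposition's claimed range $-1\le t\le 1$ is therefore not supported as far as ({\bf A\ref{A4}$'$}) (and hence the representer theorem of Corollary \ref{corA4}) is concerned, even though ({\bf A\ref{A1}$'$})--({\bf A\ref{A3}$'$}) do hold on the whole range. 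The honest conclusion of your computation is admissibility for $0\le t\le 1$, and you should state that restriction explicitly rather than leave the negative-$t$ case open.
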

\begin{proof}
Let $m\in \bN$ and $0<x_1<x_2<\cdots<x_m<1$. A straightforward calculation
shows that the determinant of the kernel matrix
$K_t[\x]$ is given by
$$
x_1(1-tx_m)(x_2-x_1)(x_3-x_2)\cdots(x_m-x_{m-1}).
$$
It follows that $K_t$ is strictly positive definite for any $-1\le t\le 1$ and thus it satisfies the assumption ({\bf A\ref{A1}$'$}).
 Moreover, the function $K_t$ is clearly uniformly bounded by $2$ for $t\in[-1,1]$.
 Additionally, using the same arguments as in \cite[Proposition 5.1]{Songl^1}, one can verify that \( K_t \) satisfies ({\bf A\ref{A3}$'$}) and ({\bf A\ref{A4}$'$}) for $t\in [-1,1]$.
\end{proof}

To present the second family of admissible kernels, we shall use the Wendland's kernel function \cite{Wendland}, which has some well-behaved properties and is widely used in interpolation and kernel-based learning problems.
 We consider the restriction form of the Wendland's function
\begin{equation}\label{K_W}
  K_{w}(x,y):=\max\{1-|x-y|,0\}=1-|x-y|, ~\text{where}~x,y\in (0,1).
\end{equation}

It is straightforward to show that $K_w$ satisfies ({\bf A\ref{A1}$'$}), ({\bf A\ref{A2}$'$}), and ({\bf A\ref{A3}$'$}). We only need to focus on its Lebesgue constants. We shall show that positive linear combinations of $K_t$ and $K_w$ have Lebesgue constants
bounded above by 1.

To advance our discussion, we first introduce some fundamental concepts from convex analysis \cite{convexanalysis}. Recall that a subset \( E \) of a linear space is termed \emph{convex} if, for any elements \( x, y \in E \) and any \( t \in [0,1] \), the convex combination \( t x + (1 - t) y \) also belongs to \( E \). The \emph{convex hull} of the set \( E \) is defined by
\[
\mathrm{conv}\, E := \{ t x + (1 - t) y : x, y \in E, \, t \in [0,1] \}.
\]

A function \( f : X \to \mathbb{R} \), where \( X \) is a linear space, is called \emph{convex} if, for every real number \( \alpha \), the sublevel set \( \{ x \in X : f(x) \leq \alpha \} \) is convex. It is noteworthy that linear functions map convex sets to convex sets.

Assuming that the operator \( K[\mathbf{x}]^{-1} \) is a bounded linear transformation, the verification of condition \textbf{(A4\('\))} for a kernel \( K \) reduces to establishing the following two properties:
\begin{enumerate}
\renewcommand{\labelenumi}{(\textbf{K\arabic{enumi}}')}
\item For each index \( i \in \mathbb{N}_m \),
\[
\| K[\mathbf{x}]^{-1} K_{\mathbf{x}}(x_i) \|_1 \leq 1;
\]
\item For every \( x \in X \), the element \( K_{\mathbf{x}}(x) \) lies within the convex hull of \( \{ \pm K_{\mathbf{x}}(x_i) : i \in \mathbb{N}_m \} \).
\end{enumerate}

Finally, recall that hyperplanes in \( \mathbb{R}^k \) can be characterized as the zero sets of linear equations in the variables \( u_1, u_2, \ldots, u_k \), that is,
\[
a_1 u_1 + a_2 u_2 + \cdots + a_k u_k + a_0 = 0, \quad a_i \in \mathbb{R}, \quad i = 0, 1, \ldots, k.
\]

\begin{Thm}\label{propKWA4}
The Wendland kernel $K_w$ in (\ref{K_W}) satisfies ({\bf A\ref{A4}$'$}).
\end{Thm}
\begin{proof}
Let $m\in N$ and $0<x_1<x_2<\cdots<x_m<1$.
Since $K_w$ is positive definite \cite{Wendland},
the kernel matrix
$$
 K_w[\x]=\left[
             \begin{array}{ccccc}
                1           &   1-x_2+x_1     & 1-x_3+x_1      & \cdots  & 1-x_m+x_1     \\
                1-x_2+x_1   &   1             & 1-x_3+x_2      & \cdots  & 1-x_m+x_2     \\
                1-x_3+x_1   &   1-x_3+x_2     & 1              & \cdots  & 1-x_m+x_3     \\
                 \vdots     &  \vdots         & \vdots         & \ddots  & \vdots        \\
                1-x_m+x_1   &   1-x_m+x_2     & 1-x_m+x_3      & \cdots  & 1             \\
             \end{array}
         \right]
$$
is strictly positive definite. Therefore the points
$\{(K_w)_{\x}(x_i):i\in\bN_m\}\subseteq\bR^m$ determine an $(m-1)$-dimensional hyperplane $\pi_0$.
For simplicity, we use $K$, $K_{i,j}$ and $K_{x,j}$ to denote $K_w$, $K_w(x_i,x_j)$ and $K_w(x,x_j)$, respectively.
 Then it is readily to see that any point $(u_1,u_2,\dots,u_m)$ in $\pi_0$ must satisfy
\begin{equation*}\label{pi}
  \Pi(u_1,u_2,\dots,u_m)
: =\det\left[
             \begin{array}{ccccc}
                1   &  u_1        & u_2        & \cdots & u_m     \\
                1   &  K_{1,1}  & K_{1,2}  & \cdots & K_{1,m} \\
                1   &  K_{2,1}  & K_{2,2}  & \cdots & K_{2,m} \\
             \vdots &  \vdots     & \vdots     & \ddots & \vdots  \\
                1   &  K_{m,1}  & K_{m,2}  & \cdots & K_{m,m} \\
             \end{array}
         \right] =0.
\end{equation*}
Let
$$
 F (x):=\Pi (K_\x (x))=
 \det\left[
             \begin{array}{ccccc}
                1    &  K_{x,1}  & K_{x,2}  & \cdots & K_{x,m} \\
                1    &  K_{1,1}  & K_{1,2}  & \cdots & K_{1,m} \\
                1    &  K_{2,1}  & K_{2,2}  & \cdots & K_{2,m} \\
              \vdots &    \vdots   & \vdots     & \ddots & \vdots  \\
                1    &  K_{m,1}  & K_{m,2}  & \cdots & K_{m,m} \\
             \end{array}
 \right].
$$
For any $E\subseteq [0,1]$, denote
$$
\Gamma_{E}:=\{K_\x (x):x\in E\}\subseteq \bR^m.
$$

Obviously, $\|K[\x]^{-1}K_{\x}(x_i)\|_{1}=1$ for every $i\in\bN_m$. Now let $x\in (0,1)$ be different from $x_j,j\in\bN_m$. We first consider the case when $x\in (x_j,x_{j+1})$, $j\in \bN_{m-1}$.
A direct calculation shows that for $x_j<x<x_{j+1}$,
\begin{equation}\label{A4middle}
  \|K[\x]^{-1}K_{\x}(x)\|_{1}=\left\|(0,\dots,0,\frac{x_{j+1}-x}{x_{j+1}-x_j},\frac{x-x_j}{x_{j+1}-x_j},0,\dots,0)^T\right\|_{1}=1
\end{equation}
Next consider cases when $0< x<x_1$ and $x_m< x < 1$.
 It suffices to prove ({\bf K2$'$}) in these two cases. To this end, we will prove that $\0$, $\Gamma_{(0,x_1)}$ and $\Gamma_{(x_m,1)}$ are on the same side of the
hyperplanes which pass through
\begin{equation}\label{b_i}
  \left\{b_iK_{\x}(x_i):~b_i\in\{\pm 1\}, i\in\bN_m\right\}.
\end{equation}
 Since the original point $\0$ lies in the convex cone of $\{\pm K_\x(x_i):i\in\bN_m\}$, so does
 $\Gamma_{(0,x_1)}$ and $\Gamma_{(x_m,1)}$.

We only prove the result for the hyperplane $\pi_0$ corresponding to $b_i=1$, $i\in\bN_m$ in (\ref{b_i}) as other hyperplanes can be handled in a similar manner.
Note that
$$
\Pi(\0)=\det\left[
             \begin{array}{ccccc}
                1    &  0        & 0        & \cdots & 0       \\
                1    &  K_{1,1}  & K_{1,2}  & \cdots & K_{1,m} \\
                1    &  K_{2,1}  & K_{2,2}  & \cdots & K_{2,m} \\
             \vdots  &  \vdots   & \vdots   & \ddots & \vdots  \\
                1    &  K_{m,1}  & K_{m,2}  & \cdots & K_{m,m} \\
             \end{array}
            \right]>0.
$$
Also for $x\in (0,x_1)$,
$$
{\rm d} F(x)/{\rm d} x=\det\left[
             \begin{array}{ccccc}
                0    &  1        & 1        & \cdots & 1       \\
                1    &  K_{1,1}  & K_{1,2}  & \cdots & K_{1,m} \\
                1    &  K_{2,1}  & K_{2,2}  & \cdots & K_{2,m} \\
             \vdots  &  \vdots   & \vdots   & \ddots & \vdots  \\
                1    &  K_{m,1}  & K_{m,2}  & \cdots & K_{m,m} \\
             \end{array}
            \right]<0.
$$
Similarly, ${\rm d} F(x)/{\rm d} x>0$ for $x\in (x_m,1)$. Combining this with the facts
$$
F(x_1)=F(x_m)=0,
$$
we have that $F(x)\ge 0$ for $(0,x_1)\cup (x_m,1)$.
That is, $\Gamma_{(0,x_1)}$, $ \Gamma_{(x_m,1)}$, and ${\0}$ locate on the same side of $\pi_0$.

We conclude that ({\bf K1$'$}) and ({\bf K2$'$}) are satisfied, and thus ({\bf A\ref{A4}$'$}) holds true.
\end{proof}

Positive linear combinations of $K_t$ and $K_{w}$ are also admissible.
\begin{Prop}\label{propC1K+C2Kadmissible}
The following class of kernels
\begin{equation}\label{C1K+C2Kadmissible}
  K:=C_1 K_t + C_2 K_w,\ \ C_1,C_2>0.
\end{equation}
satisfies ({\bf A\ref{A4}}$'$).
\end{Prop}
\begin{proof} It suffices to prove
$$
\alpha K_{t}+\beta K_w,~~\alpha,\beta\in (0,1)~\text{and}~\alpha+\beta=1,
$$
all satisfy ({\bf A\ref{A4}$'$}). Let $0< x_1<x_2<\cdots<x_m< 1$. We reuse the notations $F,\pi_0$ and $\Pi$ in the proof of Theorem \ref{propKWA4},
with the kernel here being $K:=\alpha K_{t}+(1-\alpha)K_w$.
One sees that $F'(x)$ is a function independent of $x$. Therefore it is constant for $x\in(x_i,x_{i+1}),~i\in\bN_{m-1}$.
Since
$$
\Pi(K_\x(x_i))=F(x_i)=F(x_{i+1})=\Pi(K_\x(x_{i+1}))=0,
$$
$F'(x)=0$. That is, $\Gamma_{[x_1,x_m]}\subseteq \pi_0$.

The discussions for the two cases $x\in(0,x_1)$ and $x\in(x_m,1)$ are similar. Therefore we just show the first case with $b_i=1$ in (\ref{b_i}) for every $i\in\bN_m$. Note that the function $F'(x)$ is also a constant on the interval $(0,x_1)$ and
$$
 \Pi(\0)=\det K[\x]>0.
$$
By similar reasoning as in Proposition \ref{propKWA4},
\begin{equation*}
  \begin{split}
  {\rm d} F(x)/{\rm d} x
      =\det\left[
             \begin{array}{ccccc}
                0   & \beta        &\beta          & \cdots & \beta       \\
                1   &  K_{1,1}     & K_{1,2}       & \cdots & K_{1,m}     \\
                1   &  K_{2,1}     & K_{2,2}       & \cdots & K_{2,m}     \\
             \vdots &  \vdots      & \vdots        & \ddots & \vdots      \\
                1   &  K_{m,1}     & K_{m,2}       & \cdots & K_{m,m}     \\
             \end{array}
            \right]<0.
  \end{split}
\end{equation*}
which means that $\Gamma_{(0,x_1)}$ and $\0$ are located on the same side of $\pi_0$. Likewise, one can show that $\Gamma_{(0,x_1)}\cup \Gamma_{(x_m,1)}$ and $\0$ locate on the same side of any hyperplane in (\ref{b_i}). The proof is complete.
\end{proof}

\subsection{Admissible kernel for multi-task learning}
We will show that the multi-task kernel defined in (\ref{KA}) is admissible whenever $G$ is.
Let $G$ be an scalar-valued kernel and $\bA\in\cL(\cB_p,\cB_q)$ be an invertible operator as in ({\bf A\ref{A2}}).
\begin{Lem}\label{lemLambdaxK}
Let ${\G}:X\times X\to\cL(\cB_p,\cB_q)$ be a multi-task kernel given as in (\ref{KA})
and $\x$ be a set of $m$ pairwise distinct points.
If the Lebesgue constant $\Lambda_\x^{p,1}(G)$ is bounded by $\alpha_m>0$, then so is $\Lambda_\x^{p,1}({\G})$.
\end{Lem}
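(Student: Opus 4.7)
The plan is to show that the operator norm $\|{\G}[\x]^{-1}{\G}_\x(x)\|_{p,1}$ coincides exactly with the scalar Lebesgue quantity $\|G[\x]^{-1}G_\x(x)\|_1$, so that the lemma follows immediately by taking the supremum over $x \in X$. The key observation is that the single fixed operator $\bA$ appears in every block of $\G[\x]$ and $\G_\x(x)$, and since the scalar values $G(x_i,x_j)$ commute with $\bA$, the operator part and the scalar part decouple.

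First I would verify that $\G[\x]^{-1}$ has the block form whose $(i,j)$-entry is $(G[\x]^{-1})_{ij}\,\bA^{-1}$. This is immediate from (\textbf{A\ref{A1}}) once one checks the required operator-matrix identities, using that $G[\x]$ is invertible (since $G$ satisfies the scalar analog of (\textbf{A\ref{A1}}), which is part of $G$ being admissible and hence having $\Lambda_\x^{p,1}(G)$ well-defined) and that $\bA$ is an invertible linear bijection between $\cB_p$ and $\cB_q$. The verification amounts to lifting the scalar identity $G[\x]\,G[\x]^{-1}=I_m$ entrywise, using the trivial commutation of scalars with $\bA$ and $\bA^{-1}$.

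Next I would compute, for arbitrary $c \in \cB_p$, the $i$-th component of ${\G}[\x]^{-1}{\G}_\x(x)\,c$:
$$\sum_{j=1}^m (G[\x]^{-1})_{ij}\,\bA^{-1}\,G(x,x_j)\,\bA\, c = \Bigl(\sum_{j=1}^m (G[\x]^{-1})_{ij}\,G(x,x_j)\Bigr) c = b_i\, c,$$
where $b_i := (G[\x]^{-1} G_\x(x))_i \in \bC$ and the crucial step pulls the scalar $G(x,x_j)$ past $\bA^{-1}$ so that $\bA^{-1}\bA$ collapses to the identity on $\cB_p$. Hence ${\G}[\x]^{-1}{\G}_\x(x)\,c = (b_i c)_{i=1}^m \in \cB_p^m$.

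Finally I would read off the $\|\cdot\|_{p,1}$ operator norm: $\|(b_i c)_{i=1}^m\|_{p,1} = \sum_i |b_i|\,\|c\|_p = \|c\|_p\cdot\|G[\x]^{-1}G_\x(x)\|_1$, which is independent of the direction of $c$. Dividing by $\|c\|_p$ and taking the supremum over $c \ne \0$ yields $\|{\G}[\x]^{-1}{\G}_\x(x)\|_{p,1} = \|G[\x]^{-1}G_\x(x)\|_1$, and supping over $x \in X$ gives $\Lambda_\x^{p,1}({\G}) = \Lambda_\x^{p,1}(G) \le \alpha_m$. There is no genuine obstacle: the entire content is the commutativity of scalars with the fixed operator $\bA$, which collapses the operator-valued Lebesgue constant onto its scalar counterpart.
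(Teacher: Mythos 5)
Your proposal is correct and follows essentially the same route as the paper: both exploit that the fixed operator $\bA$ commutes with the scalar entries of $G$, so that ${\G}[\x]^{-1}{\G}_\x(x)$ collapses to the scalar vector $G[\x]^{-1}G_\x(x)$ with identity operators in each block (the paper writes it as $(b_1(x)\bI_p,\dots,b_m(x)\bI_p)^T$), whence the $\|\cdot\|_{p,1}$ operator norm is controlled by $\sum_{i=1}^m|b_i(x)|$. The only difference is cosmetic: you push the computation to the sharper equality $\Lambda^{p,1}_{\x}({\G})=\Lambda^{p,1}_{\x}(G)$, whereas the paper records only the inequality, which is all the lemma needs.
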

\begin{proof} We compute that
\begin{equation*}
\begin{split}
  {\G}[\x]^{-1}{\G}_\x(x)
  &=[G(x_i,x_j)\bA:i,j\in\bN_m]^{-1}\cdot(G(x,x_i)\bA:i\in\bN_m)^T\\
  &=[{\rm diag}(\bA)[G(x_i,x_j)\bI_p:i,j\in\bN_m]]^{-1}{\rm diag}(\bA)(G(x,x_1)\bI_p,\dots,G(x,x_m)\bI_p)^T\\
  &=[G(x_i,x_j)\bI_p:i,j\in\bN_m]^{-1}{\rm diag}(\bA)^{-1}{\rm diag}(\bA)(G(x,x_1)\bI_p,\dots,G(x,x_m)\bI_p)^T\\
  &=[G(x_i,x_j)\bI_p:i,j\in\bN_m]^{-1}(G(x,x_1)\bI_p,\dots,G(x,x_m)\bI_p)^T\\
  \end{split}
\end{equation*}
where ${\rm diag}(\bA)$ is an $m\times m$ block diagonal matrix with $\bA$ being its diagonal entries.
Then
\begin{equation*}
  \begin{split}
     \Lambda^{p,1}_\x({\G})
 &= \max_{x\in X}\left\|(b_1(x)\bI_p,b_2(x)\bI_p,\dots,b_m(x)\bI_p)^T\right\|_{p,1}\\
 &\le \max_{x\in X}\sum_{i=1}^m |b_i(x)| =\Lambda_\x^{p,1}(G) \le \alpha_m,
   \end{split}
\end{equation*}
which completes the proof.
\end{proof}

%The following corollary follows directly from Lemma \ref{lemLambdaxK}.
%\begin{corollary}\label{coralpha}
%Let $\G$ be defined as in Lemma \ref{lemLambdaxK}. If the Lebesgue constant $\Lambda_{\bf w}^{p,1}(G)$ is uniformly bounded by $\alpha>0$ for all pairwise distinct points ${\bf w}=\{w_1,w_2,\dots,w_k\}, k\in\bN$, then $\Lambda^{p,1}({\G})\le\alpha$.
%\end{corollary}

%The connections between the assumptions ({\bf A\ref{A3}$'$}) and ({\bf A\ref{A3}}) are stated below.
%\begin{Lem}\label{lemA3}
%Let $K:X\times X\to\bC$ be a kernel function satisfying ({\bf A\ref{A3}$'$}), then for any invertible operator $\bA\in\cL(\cB_p,\cB_q)$, ${\K}=K\bA$ satisfies ({\bf A\ref{A3}}).
%\end{Lem}

%\begin{proof}
%Let $\{x_i:i\in \bN\}$ be pairwise distinct points in $X$ and $c_i\in\cB_p,~i\in \bN$.
%Suppose that $\sum\limits_{i\in\bN} {\K}(x_i,x)c_i=\0$ for every $x\in X$. Then
%$$\sum\limits_{i\in\bN} {\K}(x_i,x)c_i=\bA\sum\limits_{i\in\bN} {K}(x_i,x)c_i=\0.$$
%As a consequence, $\sum\limits_{i\in\bN} {K}(x_i,x)(c_i)_k=0$ for every $k\in\bN$.
%By the assumption on $K$, $(c_i)_k=0$ for every $i,k\in\bN$, which implies %$c_i=0,i\in\bN$.
%\end{proof}

%It follows from the above Lemma that ({\bf A\ref{A3}}) is automatically satisfied %by the kernel with the form $K(x,x')\bA$.

 We are now ready to present the following proposition.
\begin{Thm}\label{propadmissibleKA}
Let $K$ be an admissible scalar-valued kernel,
and $\bA$ be an invertible operator in $\cL(\cB_p,\cB_q)$, then ${\K}=K\bA$ is an admissible matrix-valued kernel.
\end{Thm}
\begin{proof} Note that the assumption ({\bf A\ref{A1}}) follows from the fact that $\bA$ is invertible
 and $K$  is strictly positive. And ({\bf A\ref{A2}}) follows by noting
 $\|{\K}(x,x')\|_{\cL(\cB_p,\cB_q)}\le |K(x,x')|\cdot\|\bA\|_{\cL(\cB_p,\cB_q)}$. As for  ({\bf A\ref{A3}}), let $\{x_i:i\in \bN\}$ be pairwise distinct points in $X$ and $c_i\in\cB_p,~i\in \bN$.
Suppose that $\sum\limits_{i\in\bN} {\K}(x_i,x)c_i=\0$ for every $x\in X$.
Then
$$
 \sum\limits_{i\in\bN} {\K}(x_i,x)c_i=\bA\sum\limits_{i\in\bN} {K}(x_i,x)c_i=\0.
$$
As a consequence, $\sum\limits_{i\in\bN} {K}(x_i,x)(c_i)_k=0$ for every $k\in\bN$.
By the assumption on $K$, $(c_i)_k=0$ for every $i,k\in\bN$, which implies $c_i=0,i\in\bN$. Therefore, ({\bf A\ref{A3}}) is also satisfied. Finally, by Lemma \ref{lemLambdaxK} and ({\bf A\ref{A4}$'$}), ({\bf A\ref{A4}}) holds true.
\end{proof}

By the above theorem and the results in Section 5.1, for any invertible operator $\bA\in\cL(\cB_p,\cB_q)$,
$$
{\K}_t=K_{t}\bA,\ {\K}_{w}=K_{w}\bA,\ {\K}_{\exp}=K_{\exp}\bA
$$
and
$$
C_1{\K}_t+C_2{\K}_{w},\ \ C_1,C_2>0
$$
are all admissible multi-task kernels.

\section{A Relaxed Linear Representer Theorem}
As we have mentioned in Section 5, only a few kernels satisfy the admissible assumption ({\bf A\ref{A4}}).  This restriction excludes many widely used kernels.
This section aims at weaking the condition ``Lebesgue constants are unifromly bounded by 1''
to accommodate more kernels.

Let $f:X\to\cB_q$ and denote for a set $\z=\{(x_i,y_i):i=1,2\dots,m\}$
$$
 \cE_\z(f):=\frac{1}{m}\sum_{i=1}^m \|f(x_i)-y_i\|_{\cB_q}.
$$
Recall that $\BK$ satisfies the linear representer theorem provided that
$$
 \min_{f\in\cS^\x}\cE_\z(f)+\lambda\|f\|_{\BK}=\min_{f\in\BK}\cE_\z(f)+\lambda\|f\|_{\BK}.
$$
To weaken ({\bf A\ref{A4}}), we consider relaxing the above to
\begin{equation}\label{relaxedlinear}
  \min_{f\in\cS^\x}\cE_\z(f)+\lambda\|f\|_{\BK}\le\min_{f\in\BK}\cE_\z(f)+\lambda\beta_m\|f\|_{\BK}
\end{equation}
where $\beta_m$ is a constant depending on the sampling numbers and the kernel ${\K}$.

Then we have a corresponding relaxed version of Lemma \ref{lemSxSx^-}.
\begin{Lem}\label{lembeta_m}
If there exists some $\beta_m\ge 1$ such that for all $\y\in\cB_q^m$,
\begin{equation}\label{relaxedS^xS^x-}
 \min_{f\in\cI_\x(\y)}\|f\|_{\BK}
 \ge \frac{1}{\beta_m}\min_{f\in\cI_\x(\y)\cap\cS^\x}\|f\|_{\BK}.
\end{equation}
Then the relaxed linear representer theorem (\ref{relaxedlinear}) holds true for any completely continuous loss function
$L$ and regularization parameter $\lambda>0$.
\end{Lem}
\begin{proof} Suppose that (\ref{relaxedS^xS^x-}) is satisfied. Let $f_0$ be a minimizer of
$$
 \min_{f\in\BK} L(f(\x),\y)+\lambda\beta_m\|f\|_{\BK}.
$$
By (\ref{relaxedS^xS^x-}), we can choose $g\in\cS^\x$ such that $g(\x)=f_0(\x)$ and
$$
 \|g\|_{\BK}\le \beta_m \|f_0\|_{\BK}.
$$
Then
$$
 L(g(\x),\y)+\lambda\|g\|_{\BK}\le L(f_0(\x),\y)+\lambda\beta_m\|f_0\|_{\BK},
$$
and hence the proof is complete.
\end{proof}

Next, we shall give a characterization of (\ref{relaxedS^xS^x-}),
which gives rise to a weaker version of ({\bf A\ref{A4}}) for the relaxed representer theorem.

\begin{Thm}
For any $q\in [1,+\infty]$, (\ref{relaxedS^xS^x-}) holds true for any $\y\in\cB_q^m$ if and only if
\begin{equation}\label{relaxedbeta}
  \|{\K}[\x]^{-1}{\K}_\x(x)\|_{p,1}\le \beta_m~\text{for~all~}x\in X.
\end{equation}
\end{Thm}
\begin{proof} We employ analogous arguments to those presented in Lemma \ref{lemSxSx^-} and Theorem \ref{thmMNI}.

 Let $f_0={\K}^\x(\cdot){\K}[\x]^{-1}\y$ and $g$ be an arbitrary function in $\cI_\x(\y)\cap\cB_0$.
 As in Lemma \ref{lemSxSx^-}, we may assume
 $g\in \cI_\x(\y)\cap\cS^{\x\cup{\bt}}$ for some $\bt=\{t_i:i\in\bN_n\}\subseteq X\sm\x$.
 Let ${\bb}:=g({\bt})$, and
 $$
  \left({\K}_\x(\bt)\right)_{ij}=({\K}(t_j,x_i):i\in\bN_m,j\in\bN_n)~\text{and}~({\K}^\x(\bt))_{ji}=({\K}(x_i,t_j):i\in\bN_m,j\in\bN_n)
 $$
 be $m\times n$ and $n\times m$ operator matrices, respectively.
 Then
 \begin{equation}\label{g_BK}
    \|g\|_{\BK}
 =   \left\|\left[
                  \begin{array}{cc}
                  {\K}[\x]    &  {\K}_\x(\bt) \\
                  {\K}^\x(\bt)  & {\K}[\bt]   \\
                  \end{array}
           \right]^{-1} \left[\begin{array}{c}
                           \y \\
                           {\bb}
                         \end{array}
                        \right]
     \right\|_{p,1}
 =   \left\| \left[
                   \begin{array}{c}
                       {\K}[\x]^{-1}\y-{\K}[\x]^{-1}{\K}_\x (\bt)\widetilde{{\bb}} \\
                         \widetilde{{\bb}}
                   \end{array}
              \right]
     \right\|_{p,1},
  \end{equation}
  where
  $$
   \widetilde{{\bb}}=
   \left[{\K}[t]-{\K}^\x(\bt){\K}[\x]^{-1}{\K}_\x(\bt)\right]^{-1}[{\bb}-{\K}^\x(\bt){\K}[\x]^{-1}\y]\in\cB_p^n.
  $$

  For the necessity part, if (\ref{relaxedS^xS^x-}) holds true for all $\y\in\cB_q^m$, then we let $\y={\K}[\x,t]\widetilde{b}$
  by choosing $\bt$ to be a singleton $\{t\}$ and $\widetilde{b}\in\cB_p$ with $\|\widetilde{b}\|_p=1$ to obtain
  \begin{equation*}
     \begin{split}
        \|g\|_{\BK}=\left\|\left[\begin{array}{c}
           \0 \\
           \widetilde{b}
         \end{array}
   \right]\right\|_{p,1}
   &=1 \ge \frac{1}{\beta_m} \|f_0\|_{\BK}\\
   &=\sup_{\widetilde{b}}\frac{1}{\beta_m} \|{\K}[\x]^{-1}\y\|_{p,1}
          =\sup_{\widetilde{b}}\frac{1}{\beta_m} \|{\K}[\x]^{-1}{\K}_\x(t)\widetilde{b}\|_{p,1}\\
   &=\frac{1}{\beta_m} \|{\K}[\x]^{-1}{\K}_\x(t)\|_{p,1},
       \end{split}
  \end{equation*}
  which yields (\ref{relaxedbeta}).

  Conversely, suppose that (\ref{relaxedbeta}) is satisfied.
  Then we need to prove that for all $g\in\cI_\x(\y)$,
  $$
   \|g\|_{\BK}\ge \frac{1}{\beta_m} \|f_0\|_{\BK}=\frac{1}{\beta_m} \|{\K}[\x]^{-1}\y\|_{p,1}.
  $$
  Follow similar arguments as those in Theorem \ref{thmMNI}, we first let $g\in\cI_\x(\y)\cap \cB_0$ have the norm (\ref{g_BK}) for some $\bt$.
  If $\|{\K}[\x]^{-1}\y\|_{p,1}\le\beta_m \|\widetilde{{\bb}}\|_{p,1}$, then
  $$
   \|g\|_{\BK}\ge\|\widetilde{{\bb}}\|_{p,1}\ge \frac{1}{\beta_m} \|{\K}[\x]^{-1}\y\|_{p,1}.
  $$
   If $\|{\K}[\x]^{-1}\y\|_{p,1}>\beta_m \|\widetilde{{\bb}}\|_{p,1}$,
   then by (\ref{Matrixnormcompatible}) and (\ref{relaxedbeta}),
   \begin{equation*}
     \begin{split}
       \|g\|_{\BK}
   &\ge \|{\K}[\x]^{-1}\y\|_{p,1}-\|{\K}[\x]^{-1}{\K}_\x(t)\widetilde{{\bb}}\|_{p,1}+\|\widetilde{{\bb}}\|_{p,1}\\
   &\ge \|{\K}[\x]^{-1}\y\|_{p,1}-\max\limits_{j\in\bN_n}\left(\|{\K}[\x]^{-1}{\K}_\x(t_j)\|_{p,1}\right)\|\widetilde{{\bb}}\|_{p,1}
         +\|\widetilde{{\bb}}\|_{p,1}\\
   &\ge \|{\K}[\x]^{-1}\y\|_{p,1}-(\beta_m-1)\|\widetilde{{\bb}}\|_{p,1}\\
   &\ge \|{\K}[\x]^{-1}\y\|_{p,1}-(\beta_m-1)\frac{1}{\beta_m}\|{\K}[\x]^{-1}\y\|_{p,1}\\
   &=   \frac{1}{\beta_m}\|{\K}[\x]^{-1}\y\|_{p,1}.
     \end{split}
   \end{equation*}
  This shows that for $g\in\cI_\x(\y)\cap \cB_0$,
  $$
   \|g\|_{\BK}\ge \frac{1}{\beta_m} \|f_0\|_{\BK}=\frac{1}{\beta_m} \|{\K}[\x]^{-1}\y\|_{p,1}.
  $$
  Then by the same limiting process as in Theorem \ref{thmMNI}, we prove the sufficiency.
\end{proof}

The above result together with Lemma \ref{lembeta_m} provide a relaxation of the requirement ({\bf A\ref{A4}}).
 The relaxed requirement do accommodates more kernels.
 We give some examples.
 It was shown in \cite{DeMarchi} that
 if a translation invariant kernel $K(x,y)=\phi(x-y)$ on $\bR^d$ satisfies for some positive constants $c_1,c_2,M,\tau>0$ that
 $$
   c_1(1+\|\xi\|_2^{2})^{-\tau}\le \hat\phi(\xi)\le c_2(1+\|\xi\|_2^{2})^{-\tau},\ \ \|\xi\|_2>M,
 $$
 then its Lebesgue constants for quasi-uniform inputs are bounded by a multiple of $\sqrt{d}$. Scalar-valued kernels satisfying the above Fourier transform condition include
 the followings:
  \begin{enumerate}
    \item Poisson radial functions in \cite{Fornberg}
    $$
     \phi_d(r)=\frac{J_{d/2-1}(\varepsilon r)}{(\varepsilon r)^{d/2-1}},~d\in\bN,
    $$
    where $\varepsilon>0$ and $J_\nu(r)$ is the Bessel function of the first kind of order $\nu$.
    \item Mat\'{e}rn functions
$$
    \phi(r)=\frac{2^{1-\nu}}{\Gamma_\nu}r^{\nu}K_{\nu}(r),
$$
    where $K_\nu$ is~the~modified~Bessel~function.
    \item Wendland's compactly supported functions
     $$
     \phi(r)=(\max\{0,1-r\})^k p(r),\ \ k\ge 2,
     $$
     where $p$ is a special polynomial (see \cite{Wendland}).
  \end{enumerate}

One may use the above scalared-valued kernels to construct desired operator-valued
kernels admissible for relaxed linear representer theorems as in Section 5.

%\section{Conclusions}
%We established a theory for multi-task learning in vector-valued RKBS with $\ell_{p,1}$-norm.
%These norms include the classical $\ell_1$-norm and the group lasso norm. We explicitly construct the vector-valued RKBS by using admissible kernel functions. We proved that the representer theorem for acceptable learning scheme, the representer theorem for minimal norm interpolation, and the admissible assumption {\bf(A\ref{A4})} are equivalent. As for admissible kernels, we presented a new family of admissible kernels and based on which we construct admissible kernels for multi-task learning. Finally, the relaxed version of admissibility is presented, which allows the whole theory to accommodate for more widely used kernels.

{\small
%\begin{thebibliography}{00}
			
}
 \end{document}